\documentclass[11pt]{article}

\usepackage[margin=1in]{geometry}
\usepackage{amsmath,amssymb,amsthm,mathtools}
\usepackage{booktabs,array,tabularx}
\usepackage[dvipsnames]{xcolor}
\usepackage{tikz}
\usetikzlibrary{arrows.meta,calc,positioning,fit,decorations.pathreplacing,shapes.geometric,backgrounds}
\usepackage{enumitem}
\usepackage{caption}
\usepackage{hyperref}
\hypersetup{colorlinks=true,linkcolor=blue,citecolor=cyan,urlcolor=blue,filecolor=blue}

\definecolor{RedGraph}{RGB}{205,30,45}
\definecolor{BlueGraph}{RGB}{0,105,200}
\definecolor{SoftRed}{RGB}{255,235,235}
\definecolor{SoftBlue}{RGB}{232,243,255}
\definecolor{SoftGray}{RGB}{245,245,245}
\definecolor{DarkGray}{RGB}{85,85,85}

\newtheorem{theorem}{Theorem}

\newtheorem{claim}{Claim}
\newtheorem{conjecture}{Conjecture}

\newtheorem{fact}{Fact}
\theoremstyle{definition}

\newcommand{\E}{\mathbb E}
\newcommand{\Pbb}{\mathbb P}
\newcommand{\red}{\mathrm{red}}
\newcommand{\blue}{\mathrm{blue}}
\newcommand{\fall}[2]{(#1)_{#2}}

\title{On the threshold Ramsey multiplicity conjectures for paths and even cycles}
\author{Ting HUANG, Jiabao YANG, Yaojun CHEN\footnote{Corresponding author. Email: yaojunc@nju.edu.cn}\\
 \small{School of Mathematics, Nanjing University, Nanjing 210093, P.R. CHINA}}
\date{}

\begin{document}
\maketitle

\begin{abstract}
The Ramsey number $r(H)$ of a graph $H$ is the minimum positive integer $n$ such that every red/blue edge-coloring of the complete graph $K_n$ on $n$ vertices contains a monochromatic copy of $H$.  The threshold Ramsey multiplicity $m(H)$ of $H$ is the minimum number of monochromatic copies of $H$ over all red/blue edge-colorings of $K_{r(H)}$.
Let $P_t$ and $C_t$ be a path and a cycle on $t$ vertices, respectively.
In this paper, by using combinatorial and local random construction, we show that 
$$m(C_{2t})\le t^{-\gamma+o(1)}\frac{(2t-1)!}{2}, \qquad
m(P_{2t+1})\le t^{-\gamma+o(1)}\frac{t}{2}(2t)!,$$
and
$$m(P_{2t})\leq \left(\frac{7}{8}+o(1)\right)\frac{(2t)!}{2},$$
for sufficiently large $t$, where $\gamma=1/(1+\sqrt{2})$. These results disprove two conjectures on the threshold Ramsey multiplicity for even cycles and paths, due to Conlon, Fox, Sudakov, and Wei.

\end{abstract}

\noindent\textbf{Keywords.} Threshold Ramsey multiplicity; Paths; Even cycles; probabilistic method 

\section{Introduction}
The Ramsey number $r(H)$ of a graph $H$ is the minimum positive integer $n$ such that every red/blue coloring of the edges of the complete graph $K_n$ on $n$ vertices contains a monochromatic copy of $H$.
Determining classical Ramsey numbers is a very challenging problem, and the exact value of $r(H)$ is known only in a few special cases. For example, the Ramsey number of $K_5$ is still unknown.
In fact, even for generalized Ramsey numbers, there are few families of graphs, including paths and cycles, whose Ramsey numbers are determined. 
Let $P_t$ and $C_t$ denote a path and a cycle on $t$ vertices, respectively.
Gerencs\'er and Gy\'arf\'as \cite{Gerencser1967} determined the Ramsey numbers of paths.

\begin{theorem}[Gerencs\'er and Gy\'arf\'as \cite{Gerencser1967}]\label{thm:Ramsey-paths}
For every integer $t$, 
$$r(P_t)=t-1+\left\lfloor\frac{t}{2}\right\rfloor.$$
\end{theorem}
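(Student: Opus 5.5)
The statement has two halves: a lower-bound construction showing $r(P_t)>t-2+\lfloor t/2\rfloor$, and an upper bound showing that every $2$-coloring of $K_n$ with $n=t-1+\lfloor t/2\rfloor$ contains a monochromatic $P_t$. Small $t$ ($t\le 3$) can be checked directly, so assume $t\ge 4$.

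\textbf{Lower bound.} Set $N=t-2+\lfloor t/2\rfloor$. Split the vertices of $K_N$ into a set $A$ with $|A|=t-1$ and a set $B$ with $|B|=\lfloor t/2\rfloor-1$. Color all edges inside $A$ red and every other edge blue.
\begin{itemize}
\item A red path stays inside $A$, so it has at most $t-1$ vertices.
\item The blue graph is $K_B$ joined to the independent set $A$. No two consecutive vertices of a blue path can both lie in $A$, so the path has at most $2|B|+1=2\lfloor t/2\rfloor-1\le t-1$ vertices.
\end{itemize}
Hence this coloring has no monochromatic $P_t$.

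\textbf{Upper bound.} I would prove the stronger two-sided statement by induction, following Gerencs\'er--Gy\'arf\'as: in any $2$-coloring of $K_n$ there is a red path on $k$ vertices or a blue path on $\ell$ vertices, for suitable $n=n(k,\ell)$. The cleaner equivalent route is to show that every $2$-colored $K_n$ has a Hamiltonian-type structure. Concretely, I would use the classical lemma that the vertices of any $2$-colored complete graph can be covered by a red path and a blue path that share an endpoint. Equivalently, there is a Hamiltonian path that changes color at most once.

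\emph{Proof of the lemma.} Induct on the number of vertices. Given such a cover $R\cup B$ meeting at a common endpoint $v$, with $R$ ending at $r$ and $B$ ending at $b$, add a new vertex $x$ and look at the colors of $xr$, $xb$ and $xv$.
\begin{itemize}
\item If $xr$ is red, append $x$ to $R$.
\item If $xb$ is blue, append $x$ to $B$.
\item Otherwise $xr$ is blue and $xb$ is red. Consider the edge $rb$ and rotate: for example, if $rb$ is red, move $r$ onto the blue path via $x$, or reroute through $x$.
\end{itemize}
A short case check should close this step.

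\textbf{From the lemma to $P_t$.} This lemma alone gives only about $\max(|R|,|B|)\ge n/2$, which is too weak. The real argument needs more, and this is the main obstacle. My plan is to prove by induction on $t$ the two-color version: a $2$-colored $K_n$ with $n\ge t-1+\lfloor t/2\rfloor$ contains a monochromatic $P_t$. I would take a longest monochromatic path, say red, $P=v_1\dots v_m$ with $m\le t-1$, and consider the set $W$ of vertices outside it, where $|W|\ge\lfloor t/2\rfloor$.
\begin{itemize}
\item By maximality, every edge from $v_1$ or $v_m$ to $W$ is blue.
\item A P\'osa-type rotation argument shows that blue edges also go from $W$ to many interior vertices of $P$.
\end{itemize}
Using these, I would build a blue path that alternates between $W$ and vertices of $P$ (or zig-zags within the blue bipartite graph between $W$ and $V(P)$) on about $2|W|+1\ge t$ vertices.

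The delicate counting is the parity and floor bookkeeping. One must show that the blue bipartite graph between $W$ and the "rotation endpoints" is dense enough to contain a long path, via a K\H{o}nig/Hall matching argument. If this direct approach stalls, I would fall back on the original Gerencs\'er--Gy\'arf\'as induction. That induction removes two vertices at the ends of a long monochromatic path and applies the hypothesis for $t-2$, which increases $n$ by exactly $3$ per step. The matching increments, $(t-1+\lfloor t/2\rfloor)-(t-3+\lfloor (t-2)/2\rfloor)=3$, suggest this induction is the intended mechanism.
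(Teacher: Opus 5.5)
The paper gives no proof of this statement. It quotes it from Gerencs\'er and Gy\'arf\'as, so your proposal has to stand on its own, and as written it does not prove the theorem.

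\textbf{Lower bound.} Your colouring is correct. (Strictly, the formula needs $t\ge 2$, since $r(P_1)=1$.)

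\textbf{Upper bound.} This is the whole content of the theorem, and here there is a genuine gap: you outline three routes and complete none.
\begin{itemize}
\item The red/blue Hamiltonian-path lemma gives only a monochromatic path on about $n/2$ vertices, as you note yourself. Its case check is also misassigned. When $xr$ is blue, $xb$ is red and $rb$ is red, it is $b$ that must move, giving the red path $x\,b\,r\cdots v$. Sending $r$ through $x$ to the blue side, as $v\cdots b\,r\,x$, is the case where $rb$ is blue.
\item The fallback induction from $t-2$ to $t$ fails as stated. The hypothesis gives a monochromatic $P_{t-2}$ of unknown colour inside some $K_{n-3}$. The three remaining vertices may send only edges of the other colour to its ends, so nothing extends it by two vertices. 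Making this work needs a strengthened inductive statement, which you do not formulate.
\end{itemize}

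\textbf{The longest-path route.} This route can be completed, but its decisive step is exactly the one you leave open. Let $P=x_1\cdots x_m$ be a longest monochromatic path, say red, with $m\le t-1$.
\begin{itemize}
\item Simple insertion suffices here; no rotation is needed. Every $w\in W$ is blue to $x_1$ and $x_m$, and to at least one of $x_i,x_{i+1}$ for each $i$. Hence $w$ has at least $\lfloor m/2\rfloor+1$ blue neighbours on $P$.
\item You also have $|W|\ge\lfloor t/2\rfloor\ge\lfloor (m+1)/2\rfloor$.
\item What remains is to show that the blue bipartite graph between some $\lfloor (m+1)/2\rfloor$ vertices of $W$ and $V(P)$ contains a path on $m+1$ vertices. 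That contradicts the maximality of $P$.
\end{itemize}
Hall or K\H{o}nig cannot deliver this, since they produce matchings, not paths. Greedy chaining is not automatic either: the insertion constraints allow two vertices of $W$ to have blue neighbourhoods on $P$ meeting only in $\{x_1,x_m\}$ (odd versus even positions).

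\textbf{A tool that closes it.} You need a genuine cycle or path theorem with a one-sided degree condition. One such result is Jackson's theorem: if $2\le |A|\le k$, $|B|\le 2k-2$, and every vertex of $A$ has degree at least $k$, then there is a cycle through all of $A$.
\begin{itemize}
\item For $m$ even this applies directly. Any $w$ then has a blue neighbour off the cycle, which extends it to a path on $m+1$ vertices.
\item For $m$ odd, $|B|$ is one too large. However, the only blue neighbourhood of size $(m+1)/2$ compatible with the insertion constraints is the set of odd positions. So deleting $x_2$ restores Jackson's hypotheses, and the resulting cycle has exactly $m+1$ vertices.
\end{itemize}
Without an argument of this kind, or a correctly strengthened induction, the upper bound $r(P_t)\le t-1+\lfloor t/2\rfloor$ remains unproved in your proposal.
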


For cycles, it is trivial that $r(C_3)=r(C_4)=6$. 
The general case for cycles was proved by Faudree and Schelp \cite{Faudree1974}, and independently by Rosta \cite{Rosta1973}.

\begin{theorem}[Faudree and Schelp \cite{Faudree1974}, Rosta \cite{Rosta1973}]\label{thm:Ramsey-cycles}
For every integer $t\geq 5$,
\begin{eqnarray*}
 r(C_t)=
 \begin{cases}
 \dfrac{3t}{2}-1, & t\text{ is even},\\
 2t-1, & t\text{ is odd}.
 \end{cases}
\end{eqnarray*}  
\end{theorem}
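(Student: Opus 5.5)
We would prove the two inequalities $r(C_t)\ge\cdot$ and $r(C_t)\le\cdot$ separately. The lower bounds come from explicit colorings. The upper bound comes from combining the path result, Theorem~\ref{thm:Ramsey-paths}, with an analysis of how a long monochromatic path or cycle can fail to close up into a $C_t$.

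\textbf{Lower bounds.} For $t=2k$ even, we color $K_{3k-2}$ as follows.
\begin{itemize}
\item Take a set $A$ of $2k-1$ vertices and a set $B$ of $k-1$ vertices.
\item Color all edges inside $A$ and inside $B$ red, and all $A$--$B$ edges blue.
\end{itemize}
Every red cycle lies inside $A$ or inside $B$, so it has fewer than $2k$ vertices. The blue graph is bipartite with one side of size $k-1$, so every blue cycle has at most $2(k-1)<2k$ vertices. Hence $r(C_{2k})\ge 3k-1=\tfrac{3t}{2}-1$.

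For $t$ odd, we color $K_{2t-2}$ as two disjoint red cliques of size $t-1$, with all edges between them blue. Red cycles have at most $t-1$ vertices. The blue graph is bipartite, so it has no odd cycle. Hence $r(C_t)\ge 2t-1$.

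\textbf{Upper bound, first step.} Let $n$ be the claimed value and fix a $2$-coloring of $K_n$ with no monochromatic $C_t$. Since $n\ge t-1+\lfloor t/2\rfloor$, Theorem~\ref{thm:Ramsey-paths} gives a monochromatic, say red, path. In fact we would apply it with a larger parameter $t'$, using that $n$ exceeds $r(P_{t'})$ for some $t'$ a little above $t$ when $t$ is odd. We would then upgrade the red path $P=v_1\cdots v_m$ with $m\ge t$ to a red cycle of length at least $t$, or else extract blue structure.

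The key local observation is this. If $v_iv_j$ is red with $j-i+1=t$, we already have a red $C_t$. So every "chord of span $t-1$" along $P$ is blue, and more generally many chords of prescribed spans are forced to be blue. Applying this to the pairs $(v_i,v_{i+t-1})$ and to pairs formed with the endpoints, the forced blue edges form long blue paths or ladders that zigzag between two segments of $P$.

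\textbf{Upper bound, closing step.} We then show that these forced blue edges, together with the vertices of $K_n$ outside $P$, contain a blue $C_t$. For even $t$ we would build a blue bipartite-like structure between two arcs of length about $t/2$ and close it into a blue $C_t$. A counting argument, using Bondy-type pancyclicity (a dense Hamiltonian graph contains cycles of all lengths), would give cycles of every length, in particular length $t$. For odd $t$, the extra vertices (since $n=2t-1$) must supply the odd closure: we show that a vertex outside $P$ has red neighbors forcing a red $C_t$ or blue neighbors creating an odd blue cycle of the right length.

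\textbf{Main obstacle.} We expect the closing step to be the hardest part. Obtaining a cycle of length \emph{exactly} $t$, rather than merely at least $t$, requires a careful case analysis of chord colors along $P$. The odd case is the most delicate, since parity must be produced by the extra vertices. We would first try induction on $t$: from a monochromatic $C_{t-2}$, given by the inductive hypothesis on a subset of vertices, extend it by two vertices using the surplus of $3/2$, respectively $2$, new vertices per step. Only if that fails would we fall back on the direct path-chord analysis.
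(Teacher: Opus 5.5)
\textbf{Verdict: genuine gap. The lower bound is proved, the upper bound is not.}

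Your two lower-bound colourings are correct. They are the standard extremal ones: $K_{t-1}\cup K_{\frac t2-1}$ in red joined in blue for even $t$, and two red copies of $K_{t-1}$ joined in blue for odd $t$. They give $r(C_t)\ge$ the claimed value. The paper does not prove this theorem; it imports it from Faudree--Schelp and Rosta, where the upper bound is a long and delicate case analysis. Your proposal offers no substitute for that upper bound. It is a plan whose central ``closing step'' is left open, and the mechanisms you suggest do not work as stated.

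\textbf{Where the upper-bound plan fails.}

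\emph{Even $t$: the path is too short.} Here $n=\tfrac{3t}{2}-1=r(P_t)$ exactly, while $r(P_{t+1})=\tfrac{3t}{2}>n$. So Theorem~\ref{thm:Ramsey-paths} gives only a monochromatic path $v_1\cdots v_t$ on exactly $t$ vertices. Its only chord of span $t-1$ is $v_1v_t$, so your ``key local observation'' forces a single blue edge and no ladder. All remaining information must come from the $\tfrac t2-1$ vertices off the path and their edges to it, which is precisely the case analysis you have not done.

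\emph{Odd $t$: the forced chords are only a matching.} You get a path on about $\tfrac{4t}{3}$ vertices. The forced blue chords $v_iv_{i+t-1}$ are only about $\tfrac t3$ disjoint edges joining the first and last thirds of the path. Red chords of any other span close red cycles of length $\neq t$, which are allowed, so nothing else is forced.

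\emph{Bondy's theorem is never made applicable.} It needs a Hamiltonian monochromatic subgraph on $N\ge t$ vertices with at least $N^2/4$ edges, and for odd $t$ one that is not $K_{N/2,N/2}$. You never exhibit such a subgraph.

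\emph{The odd-$t$ parity step is only asserted.} The claim that ``blue neighbours create an odd blue cycle of the right length'' is the crux of the odd case. Your own extremal colouring shows the blue graph can be bipartite with very long even cycles. So you must prove both that the blue graph is not bipartite and that it has a cycle of length exactly $t$; neither is argued.

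\emph{The inductive fallback is not set up.} The hypothesis gives a monochromatic $C_{t-2}$ of unknown colour. Nothing shows it can be extended by two vertices in the same colour using only three (even case) or four (odd case) extra vertices. A mixed colouring of the edges to the outside vertices is exactly where a real argument is needed. Also, since $r(C_3)=r(C_4)=6$, the formula fails at $t=3,4$, so the base cases $r(C_5)=9$ and $r(C_6)=8$ would have to be proved separately.
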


Apart from trying to determine the Ramsey number of a graph, another interesting problem  is to study the Ramsey multiplicity. 
That is, for a graph $H$ and an integer $n$, let $M(H,n)$ be the minimum number of monochromatic copies of $H$ in any red/blue edge-coloring of $K_n$. 
Equivalently, if a coloring $\chi$ has $N_{\red}(H,\chi)$ red copies and $N_{\blue}(H,\chi)$ blue copies of $H$, then
$$M(H,n)=\min_{\chi}\bigl(N_{\red}(H,\chi)+N_{\blue}(H,\chi)\bigr).$$

In 1962, Erd\H{o}s~\cite{Erdos1962} conjectured that, for every complete graph $H$, 
the quantity $M(H,n)$ is asymptotically equal to the expected number of monochromatic copies of $H$ in a uniformly random two-coloring of the edges of $K_n$. 
This conjecture was later generalized by Burr and Rosta~\cite{BurrRosta1980} to all graphs $H$. 
It is true for $K_3$, by a theorem of Goodman~\cite{Goodman1959} that predates Erd\H{o}s' conjecture and almost certainly served as one of its motivations. 
Nevertheless, Thomason~\cite{Thomason1989} proved that the conjecture is already false for $K_4$. 
The failure of this natural conjecture has not diminished interest in the problem: the asymptotic behavior of $M(H,n)$, has been studied extensively; see, for instance, 
\cite{Conlon2012,Fox2008,GrzesikLeeLidickyVolec2020,JaggerStovicekThomason1996,Sidorenko1993,Sidorenko1994,Simonovits1984}.

By the definition of the Ramsey number, $M(H,n)=0$ if and only if $n<r(H)$.
It is therefore natural to ask for the value of $M(H,n)$ at the threshold
where it first becomes positive, namely at $n=r(H)$. We denote this value by
$$m(H)=M(H,r(H))$$
and refer to it as the threshold Ramsey multiplicity of $H$.

The threshold Ramsey multiplicity was first studied systematically by Harary and Prins~\cite{HararyPrins1974}.
In general, determining this parameter, or even obtaining a nontrivial lower
bound for it, appears to be a very difficult problem.
The only family for which $m(H)$ has been completely determined is stars, as shown by Harary and Prins~\cite{HararyPrins1974}.
In the same paper, Harary and Prins asked for the threshold Ramsey multiplicity of paths and cycles.
Rosta and Sur\'anyi~\cite{RostaSuranyi1976} studied the case of odd cycles, 
who obtained an exponential lower bound $m(C_t)\ge 2^{ct}.$
K\'arolyi and Rosta~\cite{Karolyi} improved the estimation to $m(C_t)\ge t^{ct}$,
which, as will be seen below, is sharp up to the value of the constant in the exponent.
More recently, Conlon, Fox, Sudakov, and Wei \cite{Conlon2022oddcycle,Conlon2023paths} proved that there exists an absolute constant $c>0$ such that $m(P_t)\ge (ct)^t,$ and an analogous lower bound holds for cycles.
They also conjectured exact formulas for sufficiently large $t$ based on some red/blue edge-colorings of $K_{r(P_t)}$ and $K_{r(C_t)}$.

\begin{conjecture}[Conlon, Fox, Sudakov, and Wei \cite{Conlon2023paths}]\label{conj:CFSW-even}
For sufficiently large $t$,
$$m(C_{2t})=\frac{2t-3}{2}(2t-2)!.$$
\end{conjecture}

\begin{conjecture}[Conlon, Fox, Sudakov, and Wei \cite{Conlon2023paths}]\label{conj:conjecture-path}
For sufficiently large $t$,
\begin{eqnarray*}
 m(P_{2t+1})=\frac{t}{2} (2t)!~\text{ and }\, ~m(P_{2t})=\frac{(2t)!}{2}.
\end{eqnarray*}
\end{conjecture}

The conjectured values above all come from the same critical split coloring.
Let $\chi(a,b)$ be the coloring of $K_{a+b}$ obtained by partitioning the
vertex set into two parts of sizes $a$ and $b$, coloring both parts blue, and
coloring all cross-edges red. This coloring is extremal for the Ramsey numbers
of paths and cycles, and Conlon, Fox, Sudakov, and Wei conjectured that the
corresponding threshold colorings are also extremal for multiplicity.

For $C_{2t}$, the conjectured coloring is obtained from
$\chi(2t,t-1)$ by changing one edge inside the blue clique of order $2t$ to
red. This creates no red $C_{2t}$, and the remaining monochromatic cycles are
the blue Hamilton cycles in $K_{2t}$ avoiding that edge, giving
\[
        \frac{(2t-1)!}{2}-(2t-2)!
        =
        \frac{2t-3}{2}(2t-2)!.
\]

For $P_{2t+1}$, the coloring $\chi(2t,t)$ contains no blue $P_{2t+1}$, and
every red copy must alternate between the two parts. Hence the number of red
copies is
\[
        \frac{t!}{2}(2t)_{t+1}
        =
        \frac{t}{2}(2t)!,
\]
where $(m)_n=m(m-1)\cdots (m-n+1)$ for integers $m\ge n\ge 1$.

For $P_{2t}$, the coloring $\chi(2t,t-1)$ contains no red $P_{2t}$, so all
monochromatic copies come from the blue clique of order $2t$, giving
\(
        (2t)!/2.
\)

The weakness of these split colorings is that they leave too many highly symmetric monochromatic structures. For even cycles, almost all Hamilton cycles of a $2t$-vertex clique remain blue. For odd paths, the complete bipartite red structure creates many alternating paths. 
For even paths, the blue clique of size $2t$ creates many paths.
 
Our constructions follow the same Ramsey‑critical split idea, but we modify the part that creates too many monochromatic copies. We keep the global split structure, but add a local restriction that
forces any remaining copy to pass through a controlled set of edges.

For even cycles, the blue clique has size $2t-1$, so it is one vertex short of
containing a blue $C_{2t}$. Any blue cycle must enter the red clique of size $t$ through
sparse random blue portal edges, while red cycles are forced to alternate
through the red clique and are reduced by the corresponding missing red edges.

For odd paths, the blue clique has size $2t$, so every blue $P_{2t+1}$ must
use vertices outside the clique. These outside vertices, which forms a red clique of size $t$, can be inserted only
through sparse blue support sets. In the red graph, the same support sets
become forbidden adjacencies, deleting many alternating red paths.

For even paths, a random sparse interface is less effective because the
endpoints of a path create additional configurations.  Instead, we use a
deterministic local defect.  Starting from the critical split coloring, we do
not add the last vertex as a full member of the large blue clique.  Rather,
we delete a small blue star from that clique.  This destroys many blue
Hamilton paths, while the new red paths created by the defect all pass
through one common center and remain controllable.

Our main results are the following.

\begin{theorem}\label{thm:main-cycle}
For sufficiently large $t$,
$$m(C_{2t})\le t^{-1/(1+\sqrt{2})+o(1)}\frac{(2t-1)!}{2}.$$
\end{theorem}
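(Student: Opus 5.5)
We propose the following random construction on $K_{3t-1}$, where $3t-1=r(C_{2t})$ by Theorem~\ref{thm:Ramsey-cycles}. Partition the vertex set into $B$ with $|B|=2t-1$ and $R$ with $|R|=t$. Color all edges inside $B$ blue and all edges inside $R$ red. Color each cross edge blue independently with probability $p=c\log t/t$ (the ``portals''), and red otherwise; the constant $c$ is chosen at the end. We bound $\E[N_{\red}+N_{\blue}]$, and some coloring achieves at most this expectation.

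\emph{Red cycles.} In the red graph $B$ is independent. So a red $C_{2t}$ has at most $t$ vertices in $B$, and since $|R|=t$ it has at least $t$ vertices in $B$. Hence it uses exactly $t$ vertices of each part and alternates between them. In the complete red bipartite graph the number of such cycles is
\[
\binom{2t-1}{t}\frac{t!\,(t-1)!}{2}=\frac{(2t-1)!}{2}.
\]
Each such cycle uses $2t$ cross edges, all of which must be red. Therefore
\[
\E N_{\red}=(1-p)^{2t}\frac{(2t-1)!}{2}\le t^{-2c+o(1)}\frac{(2t-1)!}{2}.
\]

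\emph{Blue cycles.} Since $|B|=2t-1$, a blue $C_{2t}$ uses some $k\ge1$ vertices of $R$. These vertices are pairwise nonconsecutive on the cycle, because $R$ has no blue edges, and every vertex of $R$ on the cycle is joined to its two $B$-neighbours by portals. Counting: choose the $R$-vertices ($\binom tk$ ways) and the $k-1$ omitted $B$-vertices ($\binom{2t-1}{k-1}$ ways). The number of arrangements is at most the $(2t-1)!/2$ Hamilton cycles on $2t$ labelled vertices, and each arrangement appears with probability $p^{2k}$. The contribution of $k$ is then at most about
\[
\frac{(2t^2p^2)^k}{t\,(k!)^2}\cdot\frac{(2t-1)!}{2}.
\]
Summing over $k$ and using $\sum_k x^k/(k!)^2\le e^{2\sqrt x}$ with $x=2c^2\log^2t$ gives
\[
\E N_{\blue}\le t^{2\sqrt2\,c-1+o(1)}\frac{(2t-1)!}{2}.
\]

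\emph{Optimizing.} Equating the exponents, $-2c=2\sqrt2c-1$, gives $c=1/(2+2\sqrt2)$. Both terms are then $t^{-1/(1+\sqrt2)+o(1)}(2t-1)!/2$, which is the claimed bound.

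The main obstacle is making the blue count rigorous uniformly in $k$. For small $k$ the crude bound ``at most $(2t-1)!/2$ arrangements'' must be replaced by precise counting, so that lower-order factors such as $\binom{2t-1}{k-1}\approx(2t)^{k-1}/(k-1)!$ are only used where they are accurate. Large $k$, up to $t$, must be handled by a tail estimate showing that the terms with $k\gg\log t$ are negligible. The series is dominated by $k\approx\sqrt2\,c\log t$, so there the estimates must hold up to $t^{o(1)}$ factors, and the Stirling-type bound $\sum_k x^k/(k!)^2=e^{(2+o(1))\sqrt x}$ must be verified in this regime.
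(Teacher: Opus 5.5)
Your argument is correct and is essentially the paper's proof. The paper gives each $b\in B$ a uniformly random set of exactly $q=\lfloor \frac{\gamma}{2}\log t\rfloor$ blue portals, so $q/t$ plays the role of your $p=c\log t/t$ with $c=\gamma/2$, and it bounds the identical blue sum $\sum_k\binom tk\binom{2t-1}{k-1}(q/t)^{2k}$ via $I_1(2\sqrt2 q)$ rather than your elementary $\sum_m x^m/(m!)^2\le e^{2\sqrt x}$. The ``obstacle'' you flag is not real, since only upper bounds are needed: $\binom tk\le t^k/k!$ and $\binom{2t-1}{k-1}\le (2t)^{k-1}/(k-1)!$ hold for every $k$, giving the rigorous term bound $\frac{y^k}{2t\,k!\,(k-1)!}$ with $y=2c^2\log^2 t$ (a factor $k/2$ above your ``about'' expression), and $\sum_{k\ge 1}\frac{y^k}{k!\,(k-1)!}\le y\,e^{2\sqrt y}=t^{2\sqrt2 c+o(1)}$, so no precise small-$k$ counting, tail estimate or Stirling asymptotics are required.
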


\begin{theorem}\label{thm:main-odd-path}
For sufficiently large $t$,
$$m(P_{2t+1})\le t^{-1/(1+\sqrt{2})+o(1)}\frac{t}{2}(2t)!.$$
\end{theorem}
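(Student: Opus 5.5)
We construct an explicit coloring of $K_{3t}$ (note $r(P_{2t+1})=2t+t=3t$ by Theorem~\ref{thm:Ramsey-paths}) and count its monochromatic copies of $P_{2t+1}$. Partition the vertices into $A$ with $|A|=2t$ and $B$ with $|B|=t$. Color all edges inside $A$ blue and all edges inside $B$ red. For each $b\in B$, choose a random support set $S_b\subseteq A$: every $a\in A$ is placed in $S_b$ independently with probability $p=t^{-\alpha}$, where $\alpha$ will be optimized. The cross edges $ab$ with $a\in S_b$ are colored blue, and all other cross edges are red. We bound the expected numbers of blue and red copies separately and then pick an outcome at most the sum of the expectations.

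\emph{Blue copies.} The blue graph is $K_{2t}$ on $A$ together with the sparse bipartite graph $\{ab: a\in S_b\}$. Since $A$ has only $2t$ vertices, a blue $P_{2t+1}$ uses $j\ge 1$ vertices of $B$. No edges inside $B$ are blue, so each vertex of $B$ on the path is either an endpoint adjacent to a single vertex of $S_b$, or an interior vertex with two neighbours in $S_b$. The path then decomposes into $A$-segments glued through these $B$-vertices. The number of configurations with $j$ vertices of $B$ is roughly $\binom{t}{j}(2t+1-j)!$ times the number of arrangements, multiplied by $p^{(\text{number of cross edges})}\approx p^{2j}$, with a saving of one factor for endpoints. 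The main term is $j=1$ with the $B$-vertex at an endpoint, giving about $t\cdot(2t)!\cdot p$. Here $(2t)!/2$ counts the Hamilton paths of $A$, and each of the $2$ endpoints can be extended to any $b$ with probability $p$. This is already $O(p)\cdot\frac t2(2t)!$. The terms with interior $B$-vertices or larger $j$ involve extra factors of order $tp^2/t$ per vertex, so the total stays $t^{1+o(1)}p\,(2t)!$.

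\emph{Red copies.} The red graph is $K_t$ on $B$ together with the cross edges $ab$ with $a\notin S_b$; there are no red edges inside $A$. A red $P_{2t+1}$ therefore contains at most $t+1$ vertices of $A$, and they are pairwise non-adjacent on the path. Since $|B|=t$, we may write the number of $A$-vertices as $k=t+1-s$ with $s\ge 0$. The case $s=0$ is the fully alternating path of the split coloring: it starts and ends in $A$ and has $2t$ cross edges, each surviving with probability $(1-p)$. This contributes $\frac t2(2t)!(1-p)^{2t}\approx\frac t2(2t)!e^{-2pt}$. When $s\ge1$, the path uses $s$ red edges inside $B$ in exchange for $s$ fewer $A$-vertices. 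This multiplies the count by roughly $(\text{const}\cdot t)^{s}/(2t)^{s}$ in the falling factorial, with $2t-2s$ cross edges, so it contributes about $e^{-2pt(1-s/t)}$ times a combinatorial factor that grows like $t^{O(s)}/s!$ in the positions where the $B$–$B$ edges are inserted. The key estimate is the trade-off in which $s$ internal red edges save $e^{2ps}$ in the probability but lose a factor of about $(1/2)^s\binom{t}{s}$-ish. This needs care, and it is where I expect the main obstacle. The red count is dominated by the $s$ that balances $\binom{t}{s}$-type gains against the polynomial losses in the factorial ratio.

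\emph{Optimization and the likely form of the answer.} The plain exponential $e^{-2pt}$ would be far too strong a saving, and the red terms with $s\approx pt$ will dominate. I expect the total red count to be $\frac t2(2t)!\cdot t^{-\beta+o(1)}$ for some $\beta=\beta(\alpha)$ that is decreasing in $\alpha$. The blue count is $t^{-\alpha}\frac t2(2t)!$. Balancing $\alpha$ against $\beta(\alpha)$ should give the quadratic relation that produces the exponent $\gamma=1/(1+\sqrt2)$. The same balance appears in the even-cycle case (Theorem~\ref{thm:main-cycle}), so I would carry out that computation once and reuse it. A refinement may be needed: support sets of size $t^{1-\alpha}$ built in blocks, so that the red survival probability factors cleanly. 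Finally, take an outcome of the random construction for which the total number of monochromatic $P_{2t+1}$ is at most its expectation.
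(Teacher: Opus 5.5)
Your coloring is essentially the paper's; using Bernoulli$(p)$ supports instead of uniform $q$-subsets is a harmless change. The proof still has a genuine gap: both counts are wrong, and with the correct counts no choice of $p=t^{-\alpha}$ gives the theorem.

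\textbf{Red copies.} The cases $s\ge 1$ do not exist. A red $P_{2t+1}$ has $2t+1$ vertices and $|B|=t$, so it has at least $t+1$ vertices in $A$. It has at most $t+1$ because $A$ is red-independent. So every red copy is the fully alternating path $A-B-A-\cdots-B-A$, and no edge inside $B$ is ever used. The red expectation is therefore exactly $\frac t2(2t)!\,(1-p)^{2t}\approx \frac t2(2t)!\,e^{-2pt}$. The trade-off you flag as the ``main obstacle'' does not arise, and $e^{-2pt}$ is the true saving, not an overestimate of it.

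\textbf{Blue copies.} This is where the real constraint lies, and your ``$tp^2/t$ per vertex'' is off by roughly a factor $t^2$. Inserting one more interior $B$-vertex gains about $t-s$ for the choice of vertex and about $2t/(s+1)$ for the gap. It loses only a factor $s$ on the $A$-part, since $(2t)_{2t-s}=(2t)_{2t+1-s}/s$. So each step multiplies the count by about $2t^2p^2/s^2$. Normalized by $\frac t2(2t)!$, the blue expectation is about $t^{-1}\sum_{s\ge 1}(2t^2p^2)^s/(s!(s-1)!)$. In the range $1\ll pt=o(\sqrt t)$ this is $t^{-1}e^{2\sqrt2\,pt}$ up to factors polynomial in $pt$. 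Already a single interior vertex contributes about $2tp^2$, which exceeds the endpoint term $2p$ once $tp>1$. Bounding this Bessel-type sum, for instance via $\sum_m x^m/(m!)^2\le e^{2\sqrt x}$, is the missing ingredient, and it is where the $\sqrt2$ in $\gamma$ comes from.

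\textbf{Consequence for the parameter.} With these counts, $p=t^{-\alpha}$ fails for every $\alpha$. For fixed $\alpha<1$, the blue count exceeds $\frac t2(2t)!$ by a superpolynomial factor. For $\alpha\ge 1$, the red count is $\Omega(\frac t2(2t)!)$. The right scale is $pt=\Theta(\log t)$. Taking $2pt=\gamma\log t$ makes the red term $t^{-\gamma+o(1)}$ and the blue term $t^{-1+\sqrt2\gamma+o(1)}=t^{-\gamma+o(1)}$, because $-1+\sqrt2\gamma=-\gamma$. This is exactly the paper's choice $q=\lfloor\gamma\log t\rfloor$, with $q$ playing the role of $2pt$. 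Your final balancing step is asserted (``should give the quadratic relation'') rather than derived, and it could not be derived from your estimates.
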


\begin{theorem}\label{thm:main-theorem-even-path}
For every integer $t\geq 3$, we have 
$$m(P_{2t}) \leq\frac{(2t)!}{2}-\left\lfloor \frac{t^2}{4}\right\rfloor(2t-2)!.$$
In particular, as $t\to\infty$, we obtain
$$m(P_{2t})\leq \left(\frac{7}{8}+o(1)\right)\frac{(2t)!}{2}.$$
\end{theorem}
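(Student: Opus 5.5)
The plan is to exhibit one explicit coloring of $K_{r(P_{2t})}$ and count its monochromatic copies of $P_{2t}$ exactly. By Theorem~\ref{thm:Ramsey-paths}, $r(P_{2t})=3t-1$. Start from the critical split coloring $\chi(2t,t-1)$ with blue cliques on parts $A$ ($|A|=2t$) and $B$ ($|B|=t-1$) and red cross edges. Fix a vertex $v\in A$ and a set $S\subseteq A\setminus\{v\}$ with $|S|=s$, and recolor the $s$ edges $vu$ ($u\in S$) red. This deletes a blue star $K_{1,s}$ from the blue clique on $A$. The parameter $s$ is chosen at the end.

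\textbf{Blue copies.} The blue graph is $(K_{2t}-K_{1,s})\cup K_{t-1}$. Every blue $P_{2t}$ is therefore a Hamilton path of $A$ avoiding the star. A fixed edge lies in $(2t-1)!$ Hamilton paths of $K_{2t}$. Two edges sharing $v$ lie in $(2t-2)!$ of them, by contracting the block $u_1vu_2$. No Hamilton path uses three star edges, since $v$ has degree at most $2$ on a path. By inclusion--exclusion the number of blue copies is
\[
\frac{(2t)!}{2}-s(2t-1)!+\binom{s}{2}(2t-2)!.
\]

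\textbf{Red copies (the main step).} The red graph is $K_{A,B}$ plus the star at $v$, so the only red edges inside $A$ are the star edges, and these all meet $v$. Consider a red path on $2t$ vertices containing $b\le t-1$ vertices of $B$. Each maximal run of consecutive $A$-vertices has to be separated by a $B$-vertex, which gives at least $(2t-b)-(b+1)$ edges inside $A$. All such edges contain $v$, so there are at most two of them. Hence $2t-2b-1\le 2$, and by parity $2t-2b-1$ is odd, so it must equal $1$. This forces $b=t-1$ and exactly one $A$--$A$ edge $vu$ with $u\in S$.

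To count these paths, contract the pair $vu$ to a single vertex. The result is an alternating path $A B A\cdots B A$ with $t$ slots for $A$ and $t-1$ slots for $B$. Counting directed sequences and halving, the number of red copies is
\[
\tfrac12\cdot s\cdot t\cdot 2\cdot (2t-2)_{t-1}\,(t-1)! \;=\; st\,(2t-2)!.
\]
Here the factors are: $s$ choices of $u$, $t$ positions for the merged vertex, $2$ orientations of $vu$, $(2t-2)_{t-1}$ ways to fill the other $A$-slots, and $(t-1)!$ ways to fill the $B$-slots. The last equality uses $(2t-2)_{t-1}(t-1)!=(2t-2)!$.

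This case analysis is where I expect the real work to lie. One has to make sure that no red path escapes the parity argument, including paths that use $v$ as an endpoint.

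\textbf{Combining and choosing $s$.} Adding the two counts, the total number of monochromatic copies is
\[
\frac{(2t)!}{2}-(2t-2)!\Bigl[s(2t-1)-\binom{s}{2}-st\Bigr]=\frac{(2t)!}{2}-\frac{s(2t-1-s)}{2}(2t-2)!.
\]
It then suffices to pick $s$ with $\tfrac{s(2t-1-s)}{2}\ge\lfloor t^2/4\rfloor$. For example, $s=t-1$ gives $t(t-1)/2\ge \lfloor t^2/4\rfloor$ for all $t\ge 3$. Since $M(P_{2t},3t-1)$ is at most the count for any single coloring, this gives the stated bound.

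The asymptotic form follows because $(2t-2)!=\frac{(2t)!}{2t(2t-1)}$. Hence $\lfloor t^2/4\rfloor(2t-2)!=(1/8+o(1))\,(2t)!$, and the bound becomes $(2t)!/2-(1/8+o(1))(2t)!=(7/8+o(1))\tfrac{(2t)!}{2}$.

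If the red count turns out to be larger than computed, for instance because of configurations I have missed, the fallback is a smaller star with $s\approx t/2$. That choice still leaves a saving of order $t^2$ times $(2t-2)!$, with margin to absorb extra red paths.
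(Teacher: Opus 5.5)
\textbf{There is a genuine gap in the red count, and the final choice of $s$ fails.} Your construction is the paper's construction: your $v$, $S$, $s$ are its $x$, $R$, $\alpha$. Your blue count is correct.

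The red count is where it breaks. In the parity step you treat the lower bound $2t-2b-1$ on the number of $A$--$A$ edges as if it were the actual number. When $b=t-1$, the path has $t+1$ vertices of $A$. The number of maximal $A$-runs can then be $t-1$ instead of $t$, which happens when one endpoint of the path lies in $B$. In that case the path has two $A$--$A$ edges. Both must contain $v$, so they form a segment $u_1vu_2$ with $u_1,u_2\in S$. Here $v$ is an \emph{internal} vertex whose two path-neighbours are both star endpoints. The endpoint case you were worried about is harmless; this internal case is the one that is missed.

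For example, take $t=3$, $B=\{b_1,b_2\}$ and $a\in A\setminus\{v,u_1,u_2\}$. Then $b_1\,a\,b_2\,u_1\,v\,u_2$ is a red $P_6$.

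These paths are counted as follows:
\begin{itemize}
\item choose $\{u_1,u_2\}$ in $\binom{s}{2}$ ways;
\item choose the other $t-2$ vertices of $A$ in $\binom{2t-3}{t-2}$ ways;
\item alternate the $t-1$ $A$-blocks with the $t-1$ vertices of $B$, one endpoint in each part, which gives $2((t-1)!)^2$ unoriented paths.
\end{itemize}
Altogether this adds $\binom{s}{2}(2t-2)!$ red copies.

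The red count is therefore $\bigl(st+\binom{s}{2}\bigr)(2t-2)!$. The total becomes $\frac{(2t)!}{2}-s(t-s)(2t-2)!$, not $\frac{(2t)!}{2}-\frac{s(2t-1-s)}{2}(2t-2)!$. With your choice $s=t-1$ the saving is only $(t-1)(2t-2)!$. This is strictly smaller than $\lfloor t^2/4\rfloor(2t-2)!$ for every $t\ge 4$, so the argument as written does not prove the theorem.

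The right choice is $s=\lfloor t/2\rfloor$, which gives $s(t-s)=\lfloor t^2/4\rfloor$ exactly; this is what the paper does with $\alpha=\lfloor t/2\rfloor$. There is no slack at all at this value. Your fallback of a smaller star ``with margin to absorb extra red paths'' therefore cannot work by a rough estimate. You have to enumerate this second type of red path exactly.
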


By the three theorems above, Conjectures~\ref{conj:CFSW-even} and~\ref{conj:conjecture-path} are false. Moreover, Theorems~\ref{thm:main-cycle} and \ref{thm:main-odd-path} show the true values of $m(C_{2t})$ and $m(P_{2t+1})$ are much less than the conjectured ones. Although the exact values are still unknown, the combinatorial and local random construction method applied in this paper may shed some light on the determination of threshold Ramsey multiplicity of paths and cycles.

The detailed proofs of Theorems~\ref{thm:main-cycle}, \ref{thm:main-odd-path},  and~\ref{thm:main-theorem-even-path} will be presented in Sections \ref{sec:count-even-cycles}, \ref{sec:count-odd-paths}, and \ref{sec:even paths}, respectively, and we divide each of the proofs into the following three parts: a red/blue edge-coloring construction, a count of monochromatic cycles/paths in the given coloring, and the asymptotic ratio between the resulting upper bound and the conjectured value of Conlon, Fox, Sudakov, and Wei.

\section{Proof of Theorem \ref{thm:main-cycle}}\label{sec:count-even-cycles}
\subsection{The random-portal coloring for even cycles}\label{sec:even-odd-construction}
By Theorem \ref{thm:Ramsey-cycles}, $r(C_{2t})=3t-1$.
We introduce a local random construction for a red/blue edge-coloring of the complete graph $K_{3t-1}$.
Fix integers $t\ge 1$ and $q$ with $1\le q\le t-2$. The value of $q$ will be chosen later. 
Let
$$V(K_{3t-1})=A\cup B,\,\, |A|=t,\,\,~\text{and}~|B|=2t-1,$$
where $A$ and $B$ are pairwise disjoint. 
For each vertex $b\in B$, choose a $q$-element subset $M_b\subseteq A.$
We call $M_b$ the set of \emph{blue portals} from $b$ into $A$.
The edge-coloring of $K_{3t}$ is defined as follows:
\begin{itemize}[leftmargin=2em]
\setlength{\itemsep}{1pt}
\setlength{\parsep}{1pt}
\setlength{\parskip}{1pt}
\item all edges inside $A$ are red;
\item all edges inside $B$ are blue;
\item for $a\in A$ and $b\in B$, the edge $ab$ is blue if and only if $a\in M_b$; otherwise $ab$ is red.
\end{itemize}
The described red/blue coloring of $K_n$ is illustrated in Figure~\ref{fig:AB-handstyle}.

\begin{figure}[htbp]
\centering
\begin{tikzpicture}[scale=1.05, every node/.style={font=\small}]
\tikzset{
  boxred/.style={draw=red, thick, rounded corners=3pt, fill=SoftRed},
  boxblue/.style={draw=blue, thick, rounded corners=3pt, fill=SoftBlue},
  boxgray/.style={draw=DarkGray, thick, rounded corners=2pt, fill=white},
  rededge/.style={draw=red, very thick},
  blueedge/.style={draw=blue, very thick},
  info/.style={font=\small, text=DarkGray}
}

\draw[boxred] (1.2,4.3) rectangle (8.8,6.1);
\node[font=\large] at (5.0,5.85) {$A$:~$|A|=t$};

\draw[draw=DarkGray, thick, fill=white] (3.4,5.2) ellipse (0.97 and 0.43);
\node at (3.4,5.2) {$M_b$};

\draw[boxblue] (1.0,0) rectangle (9.0,2.35);
\node[font=\large] at (5.0,0.35) {$B$:~$|B|=2t-1$};

\draw[blueedge] (4.8,1.15) -- (3.0,5.2);
\draw[blueedge] (4.8,1.15) -- (3.8,5.2);

\draw[rededge] (4.8,1.15) -- (6.8,5.2);
\draw[rededge] (4.8,1.15) -- (7.6,5.2);

\filldraw[draw=black, fill=black, thick] (3.0,5.2) circle (0.05);
\filldraw[draw=black, fill=black, thick] (3.8,5.2) circle (0.05);

\filldraw[draw=black, fill=black, thick] (6.8,5.2) circle (0.06);
\filldraw[draw=black, fill=black, thick] (7.6,5.2) circle (0.06);

\filldraw[draw=black, fill=black, thick] (4.8,1.15) circle (0.06);
\node[below] at (4.8,1.15) {$b$};

\node[info, text=blue, align=center, fill=white, inner sep=1pt] at (2.35,3.45)
  {$b$ is blue-adjacent to $M_b$};

\node[info, text=red, align=center, fill=white, inner sep=1pt] at (8.05,3.45)
  {$b$ is red-adjacent to $A\setminus M_b$};

\end{tikzpicture}
\caption{The random-portal construction for even cycles.}
\label{fig:AB-handstyle}
\end{figure}

The construction will be used probabilistically. For each $b\in B$, choose $M_b$ independently and uniformly from all $q$-subsets of $A$. We determine the expected number of monochromatic $C_{2t}$'s. If this expectation is at most $\mathbb{E}$, then at least one deterministic choice of the sets $M_b$ gives at most $\mathbb{E}$ monochromatic even cycles.

\subsection{Counting monochromatic even cycles}
For convenience, we define $$T_t^{C}:=\frac{(2t-1)!}{2}.$$
This is the number of Hamilton cycles in the complete graph $K_{2t}$.  Indeed, there are $(2t)!$ linear orderings of the vertices, and each undirected Hamilton cycle is counted $2t$ times by choosing a starting point and twice more by choosing a direction.  Hence the number is $$\frac{(2t)!}{2\cdot 2t}=\frac{(2t-1)!}{2}.$$

\subsubsection{The expected number of red cycles}

\begin{claim}\label{claim:red-cycle-exact}
The expected number of red copies of $C_{2t}$ is
$$\E\bigl(N_{\red}(C_{2t})\bigr)=T_t^{C}\rho_q^t,$$
where
$$\rho_q=\frac{\binom{t-2}{q}}{\binom{t}{q}}=\frac{(t-q)(t-q-1)}{t(t-1)}.$$
\end{claim}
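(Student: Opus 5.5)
The plan is to first pin down the combinatorial shape of every red $C_{2t}$, and then apply linearity of expectation.

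\emph{Structure.} In the red graph, $B$ is an independent set, since all edges inside $B$ are blue. So a red cycle of length $2t$ has at most $t$ vertices in $B$, because no two consecutive cycle vertices lie in $B$. It also has at most $|A|=t$ vertices in $A$. Hence it uses all $t$ vertices of $A$ and exactly $t$ vertices of $B$, and it alternates strictly between $A$ and $B$. Concretely, each $b$ on the cycle has two cycle-neighbours $a,a'\in A$, and the cycle is red if and only if $a,a'\notin M_b$ for every such $b$.

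\emph{Counting the candidates.} Next I count the alternating cycles in $K_{3t-1}$: choose the $t$-subset of $B$, then count Hamilton cycles of $K_{t,t}$. This gives
\[
\binom{2t-1}{t}\cdot\frac{t!\,t!}{2t}.
\]
I would verify that this equals $T_t^C=(2t-1)!/2$. Indeed,
\[
\binom{2t-1}{t}\frac{(t!)^2}{2t}=\frac{(2t-1)!}{t!\,(t-1)!}\cdot\frac{t!\,(t-1)!}{2}=\frac{(2t-1)!}{2}.
\]
This identity is the one point that needs care, and it is what makes the formula come out exactly as $T_t^C\rho_q^t$.

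\emph{Probability.} For a fixed alternating cycle, the $t$ vertices $b\in B$ on it have disjoint conditions: each requires that $M_b$ avoid a specific pair $\{a,a'\}$ of distinct vertices of $A$. These distinct pairs are distinct since $t\ge 2$. The sets $M_b$ are independent and uniform among $q$-subsets, so each condition holds with probability
\[
\binom{t-2}{q}\Big/\binom{t}{q}=\frac{(t-q)(t-q-1)}{t(t-1)}=\rho_q.
\]
By independence, the cycle is red with probability $\rho_q^t$. Summing over the $T_t^C$ candidate cycles by linearity of expectation gives the claim.
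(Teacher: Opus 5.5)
Your proposal is correct and follows the paper's proof essentially step for step. It uses the same alternation argument forcing all of $A$ and exactly $t$ vertices of $B$, the same count $\binom{2t-1}{t}\cdot t!(t-1)!/2=(2t-1)!/2$ of candidate cycles, and the same per-vertex probability $\rho_q$ combined via independence of the $M_b$ and linearity of expectation. (Your phrase ``these distinct pairs are distinct'' should simply say that the two $A$-neighbours of each $b$ are distinct, which holds because the cycle has length $2t\ge 4$.)
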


\begin{proof}
In the red graph, the set $B$ is an independent set. 
It follows that no two consecutive vertices on a red cycle can both belong to $B$.
Hence every red $C_{2t}$ uses all vertices of $A$, uses exactly $t$ vertices of $B$, and alternates between $A$ and those chosen vertices of $B$.

The number of choices for the $t$ vertices of $B$ is $\binom{2t-1}{t}$. For a fixed $t$-set $Y\subseteq B$, the number of alternating Hamilton cycles in the complete bipartite graph between $A$ and $Y$ is $t!(t-1)!/2$.
Thus the total number of candidate alternating cycles is
$$\binom{2t-1}{t}\frac{t!(t-1)!}{2}=\frac{(2t-1)!}{2}=T_t^{C}.$$

Now we fix one candidate alternating cycle. Every chosen vertex $b\in B$ has exactly two neighbours in $A$ on the cycle. The two corresponding edges are red if and only if neither of these two $A$-vertices lies in $M_b$. 
Since $M_b$ is a uniformly random $q$-subset of $A$, this probability is $$\rho_q=\frac{\binom{t-2}{q}}{\binom{t}{q}}=\frac{(t-q)(t-q-1)}{t(t-1)}.$$
The choices of the sets $M_b$ are independent for different vertices $b$.  Therefore the probability that the fixed candidate alternating cycle is red is $\rho_q^t$. 
By linearity of expectation, the expected number of red copies of $C_{2t}$ is
$$\E\bigl(N_{\red}(C_{2t})\bigr)=T_t^{C}\rho_q^t,$$
as required.
\end{proof}

\begin{claim}\label{claim:red-cycle-bound}
If $q=o(\sqrt t)$, then
$$\E\bigl(N_{\red}(C_{2t})\bigr)\le T_t^{C}\exp\left(-2q+O\left(\frac{q^2}{t}\right)\right).$$
\end{claim}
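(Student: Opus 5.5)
Starting from the exact formula in Claim~\ref{claim:red-cycle-exact}, it suffices to bound $\rho_q^t$, i.e.\ to show
\[
t\log\rho_q = t\log\Bigl(1-\frac{q}{t}\Bigr)+t\log\Bigl(1-\frac{q}{t-1}\Bigr) \le -2q+O\Bigl(\frac{q^2}{t}\Bigr).
\]
This uses the factorisation
\[
\rho_q=\frac{t-q}{t}\cdot\frac{t-q-1}{t-1}=\Bigl(1-\frac{q}{t}\Bigr)\Bigl(1-\frac{q}{t-1}\Bigr).
\]

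For each factor I will apply the elementary inequality $\log(1-x)\le -x$, valid for $0\le x<1$. Since $1\le q\le t-2$, both $q/t$ and $q/(t-1)$ lie in $[0,1)$. This gives
\[
t\log\rho_q\le -q-\frac{qt}{t-1} = -2q-\frac{q}{t-1}\le -2q.
\]
So in fact $\E(N_{\red}(C_{2t}))\le T_t^C e^{-2q}$, which is stronger than the claimed upper bound, since the $O(q^2/t)$ term may be taken nonnegative.

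If one wants the matching two-sided estimate $\rho_q^t=\exp(-2q+O(q^2/t))$, as presumably used later when balancing against the blue count, I would expand further. For $x\le 1/2$ we have $\log(1-x)=-x+O(x^2)$. Under the hypothesis $q=o(\sqrt t)$ we get $x=q/t\to0$, and then
\[
t\cdot O\bigl(q^2/t^2\bigr)=O\bigl(q^2/t\bigr),
\]
while the correction $q/(t-1)-q/t=O(q/t^2)$ contributes only $O(q/t)=O(q^2/t)$. Also $q^2/t=o(1)$, so the error term is genuinely lower order.

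There is no real obstacle here. The only care needed is checking that the arguments of the logarithms stay in the range where the Taylor bound applies. That is guaranteed by $q\le t-2$ for the one-sided bound, and by $q=o(\sqrt t)$ for the two-sided asymptotic.
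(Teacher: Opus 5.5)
Your proof is correct, but your main argument takes a slightly different and cleaner route than the paper's.

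The paper writes $\rho_q=1-2q/t+O(q^2/t^2)$ and uses the Taylor expansion of $\log(1+x)$. This yields the two-sided asymptotic $\rho_q^t=\exp(-2q+O(q^2/t))$, and it is where the hypothesis $q=o(\sqrt t)$ (or at least $q/t\to 0$) enters.

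You instead factor $\rho_q=(1-q/t)\bigl(1-q/(t-1)\bigr)$ and apply the exact inequality $\log(1-x)\le -x$ to each factor. This gives the non-asymptotic bound $\E\bigl(N_{\red}(C_{2t})\bigr)\le T_t^{C}e^{-2q}$ for every $1\le q\le t-2$. It has no error term and needs no growth condition on $q$.

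Only an upper bound on the red count is used when it is balanced against the blue term in the proof of Theorem~\ref{thm:main-cycle}. So your version suffices there and is more robust. The paper's expansion, which your second paragraph reproduces, additionally shows the bound is tight up to a factor $\exp(O(q^2/t))$, but that matching lower bound is never used.
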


\begin{proof}
By Claim~\ref{claim:red-cycle-exact}, it is enough to estimate $\rho_q^t$. 
Note that
$$\rho_q=\frac{(t-q)(t-q-1)}{t(t-1)}=1-\frac{2q}{t}+O\left(\frac{q^2}{t^2}\right).$$
Since $q=o(\sqrt{t})$, we have $q/t=o(1)$. Applying the Taylor expansion of $\log(1+x)$ around $x=0$, we obtain
$$\log \rho_q=-\frac{2q}{t}+O\left(\frac{q^2}{t^2}\right).$$
Multiplying by $t$ yields
$$t\log \rho_q=-2q+O\left(\frac{q^2}{t}\right),$$ and hence
$$\rho_q^t=\exp\left(-2q+O\left(\frac{q^2}{t}\right)\right),$$
which proves Claim~\ref{claim:red-path-bound}.
\end{proof}

\subsubsection{The expected number of blue cycles}

\begin{claim}\label{claim:blue-cycle-sum}
The expected number of blue copies of $C_{2t}$ satisfies
$$\E\bigl(N_{\blue}(C_{2t})\bigr)\le
T_t^{C}\sum_{s=1}^{t}\binom{t}{s}\binom{2t-1}{s-1}\left(\frac{q}{t}\right)^{2s}.$$
\end{claim}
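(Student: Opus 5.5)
The plan is to count blue cycles directly by linearity of expectation, grouping them by how many vertices of $A$ they use. The point is that $A$ is an independent set in the blue graph, since all edges inside $A$ are red, while $B$ is a blue clique on only $2t-1$ vertices. So every blue $C_{2t}$ must use some number $s\ge 1$ of vertices of $A$. Moreover, no two of these $s$ vertices can be consecutive on the cycle. Each $a\in A$ on the cycle therefore has both of its cycle-neighbours in $B$, and both connecting edges must be blue portal edges.

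First I fix $s\in\{1,\dots,t\}$ and enumerate candidate cycles. Choose the $s$-set $S\subseteq A$ in $\binom{t}{s}$ ways. Choose the remaining $2t-s$ vertices from $B$ in $\binom{2t-1}{2t-s}=\binom{2t-1}{s-1}$ ways. On the resulting $2t$ vertices I bound the number of Hamilton cycles, ignoring the requirement that $S$ be independent on the cycle, by the total number $T_t^{C}=(2t-1)!/2$ of Hamilton cycles of $K_{2t}$. This is wasteful but is exactly the form of the stated bound.

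Next I bound the probability that a fixed candidate cycle is entirely blue. If some vertex of $S$ has a cycle-neighbour in $S$, that edge is red and the probability is $0$. Otherwise the only random edges are the $2s$ edges $ab$ with $a\in S$, $b\in B$, which are distinct edges since $2t\ge 4$. Edges inside $B$ are blue deterministically. Group these $2s$ edges by their $B$-endpoint. For a fixed $b$ that must be blue-adjacent to $k$ distinct vertices $a_1,\dots,a_k$ (here $k\le 2$), we have
\[
\Pbb\bigl(a_1,\dots,a_k\in M_b\bigr)=\frac{(q)_k}{(t)_k}\le\left(\frac{q}{t}\right)^k,
\]
using $(q-i)/(t-i)\le q/t$ for $q\le t$. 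The sets $M_b$ are independent over $b$, and the exponents $k$ sum to $2s$. Hence the probability that the cycle is blue is at most $(q/t)^{2s}$.

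Finally, multiplying the number of candidates by this probability bound and summing over $s$ gives
\[
\E\bigl(N_{\blue}(C_{2t})\bigr)\le T_t^{C}\sum_{s=1}^{t}\binom{t}{s}\binom{2t-1}{s-1}\left(\frac{q}{t}\right)^{2s}.
\]
The only mildly delicate step is the dependence within a single $M_b$ when one $b$ is adjacent to two vertices of $S$. The falling-factorial inequality above handles it, so I expect no real obstacle. The harder work, namely showing that this sum is small when $q$ is of order $t^{c}$, belongs to the subsequent estimates rather than to this claim.
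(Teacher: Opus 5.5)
Your proof is correct and follows the paper's argument essentially step by step. Like the paper, you split by the number $s$ of $A$-vertices, overcount the cyclic orders on each vertex set by $T_t^{C}$, and bound the portal probability for each $b\in B$ by $(q)_{r_b}/(t)_{r_b}\le (q/t)^{r_b}$, multiplying over $b$ by independence of the sets $M_b$. One tangential correction to your closing remark: the paper takes $q=\lfloor(\gamma/2)\log t\rfloor$, not $q$ of order $t^{c}$; for $q$ polynomial in $t$ this sum is roughly $t^{-1}e^{2\sqrt2 q}$, which is enormous rather than small.
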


\begin{proof}
The blue graph has a large clique on $B$, and $|B|=2t-1$.  Hence every blue $C_{2t}$ must use at least one vertex of $A$.

Suppose that a blue $C_{2t}$ uses exactly $s$ vertices of $A$, where $1\le s\le t$.  Since $A$ is blue-independent, no two vertices of $A$ can be consecutive on the blue cycle.  The remaining $2t-s$ vertices of the cycle are chosen from $B$.  Therefore the number of possible vertex sets is
$$\binom{t}{s}\binom{2t-1}{2t-s}=\binom{t}{s}\binom{2t-1}{s-1}.$$

For any fixed set of $2t$ vertices, the number of possible undirected cyclic orders is at most $T_t^{C}=(2t-1)!/2$. This is an overcount, because many cyclic orders put two vertices of $A$ next to each other, but an overcount is enough for an upper bound.

Fix one cyclic order. If it is to become a blue cycle, every edge incident with a vertex of $A$ in the cycle must be a blue portal edge. There are exactly $2s$ such incidences.  If a vertex $b\in B$ is required to connect to $r_b$ prescribed vertices of $A$, where $r_b=0,1,2$, then
$$\Pbb(\text{all these }r_b\text{ vertices lie in }M_b)
=\frac{\binom{t-r_b}{q-r_b}}{\binom{t}{q}}
=\frac{\fall{q}{r_b}}{\fall{t}{r_b}}\le \left(\frac{q}{t}\right)^{r_b}.$$
Here $\fall n r=n(n-1)\cdots(n-r+1)$ is the falling factorial, and the inequality holds because, given $q\leq t$, for any $i\geq 0$, we have
$(q-i)/(t-i)\leq q/t$.
It implies that $\fall{q}{r_b}/\fall{t}{r_b}\le \left(q/t\right)^{r_b}$.

The sets $M_b$ are independent for different vertices $b$, and the total value of $\sum_b r_b$ is $2s$. Then the probability that all required portal edges are present is at most 
$$\prod_{b } \left( \frac{q}{t} \right)^{r_b} = \left( \frac{q}{t} \right)^{\sum_{b} r_b} = \left( \frac{q}{t} \right)^{2s}.$$

Multiplying the number of choices in each case and then summing over $s$, we obtain the following upper bound for the expected number of blue copies of $C_{2t}$:
$$\E\bigl(N_{\blue}(C_{2t})\bigr)\le
T_t^{C}\sum_{s=1}^{t}\binom{t}{s}\binom{2t-1}{s-1}\left(\frac{q}{t}\right)^{2s}.$$
The proof of the claim is complete.
\end{proof}

We now estimate the sum in Claim~\ref{claim:blue-cycle-sum}.  We shall use only two standard facts about the modified Bessel function.
The modified Bessel function of the first kind of order one is defined by the
absolutely convergent power series
\[
        I_1(x)
        =
        \sum_{m=0}^{\infty}
        \frac{1}{m!(m+1)!}
        \left(\frac{x}{2}\right)^{2m+1}.
\]
See, for example, the NIST Handbook of Mathematical Functions~\cite[Chapter~10]{NISTHandbook}.
We shall also use the standard large-argument asymptotic formula
\begin{equation}\label{eq:bessel-asymp} I_1(x)=\frac{e^x}{\sqrt{2\pi x}}\left(1+O\left(\frac1x\right)\right),\qquad x\to\infty. \end{equation}
for positive real $x$.  This is the case $\nu=1$ of the usual asymptotic expansion
for the modified Bessel function $I_\nu(x)$; see again~\cite[Chapter~10]{NISTHandbook}.

\begin{claim}\label{claim:blue-cycle-bound}
If $q\to\infty$, then
$$\E\bigl(N_{\blue}(C_{2t})\bigr)\le T_t^{C}\,t^{-1}\exp(2\sqrt2 q+o(q)).$$
\end{claim}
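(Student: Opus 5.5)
Starting from Claim~\ref{claim:blue-cycle-sum}, it suffices to show
\[
S:=\sum_{s=1}^{t}\binom{t}{s}\binom{2t-1}{s-1}\left(\frac{q}{t}\right)^{2s}\le t^{-1}\exp\bigl(2\sqrt2\,q+o(q)\bigr).
\]
I will bound the two binomial coefficients by pure powers divided by factorials and then recognise a Bessel series. We have $\binom{t}{s}\le t^s/s!$ and $\binom{2t-1}{s-1}\le (2t)^{s-1}/(s-1)!$. Hence the $s$-th term is at most
\[
\frac{t^s(2t)^{s-1}}{s!\,(s-1)!}\cdot\frac{q^{2s}}{t^{2s}}
=\frac{1}{t}\cdot\frac{2^{s-1}q^{2s}}{s!\,(s-1)!}.
\]
The powers of $t$ cancel except for a single factor $t^{-1}$, which comes from the $s-1$ in the second binomial. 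This cancellation is exactly where the claimed $t^{-1}$ comes from.

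Next I substitute $m=s-1$ and extend the sum to infinity, which is allowed since all terms are positive. This gives
\[
S\le \frac{1}{t}\sum_{m\ge0}\frac{2^{m}q^{2m+2}}{m!\,(m+1)!}.
\]
Write $x=2\sqrt2\,q$, so that $(x/2)^{2m+1}=(\sqrt2 q)^{2m+1}=2^{m}\sqrt2\,q^{2m+1}$. Then
\[
2^{m}q^{2m+2}=\frac{q}{\sqrt2}\left(\frac{x}{2}\right)^{2m+1},
\qquad\text{so}\qquad
S\le \frac{q}{\sqrt2\,t}\,I_1(2\sqrt2\,q).
\]

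Finally I apply the asymptotic formula \eqref{eq:bessel-asymp} with $x=2\sqrt2 q\to\infty$, which is valid since $q\to\infty$. It gives $I_1(2\sqrt2 q)=O\bigl(e^{2\sqrt2 q}/\sqrt q\bigr)$. Therefore
\[
S\le t^{-1}\,O(\sqrt q)\,e^{2\sqrt2 q}=t^{-1}\exp\bigl(2\sqrt2 q+O(\log q)\bigr),
\]
and $O(\log q)=o(q)$, which proves the claim.

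There is no real obstacle here. The only points needing care are making the exponent bookkeeping match the Bessel normalisation (the constant $2\sqrt2$ arises as $2\sqrt{2}$ from $(x/2)^2=2q^2$), and keeping the single $t^{-1}$ rather than losing it by bounding $\binom{2t-1}{s-1}$ too crudely. Note that no assumption $q=o(\sqrt t)$ is needed for this bound, only $q\to\infty$.
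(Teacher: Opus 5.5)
Your proposal is correct and follows essentially the same route as the paper: bound the binomials by $t^s/s!$ and $(2t)^{s-1}/(s-1)!$, recognise the resulting series as a multiple of $I_1(2\sqrt2 q)$, and apply the large-argument asymptotic. The only difference is cosmetic. You keep the exact factor $2^{s-1}$, giving $\frac{q}{\sqrt2\,t}I_1(2\sqrt2q)$, whereas the paper loosens it to $2^s$, giving $\frac{\sqrt2\,q}{t}I_1(2\sqrt2q)$; this constant factor of $2$ is absorbed into $\exp(o(q))$ either way.
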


\begin{proof}
For any $1\le s\le t$, we have
$$\binom{t}{s}\binom{2t-1}{s-1}\left(\frac{q}{t}\right)^{2s} \le \frac{t^s}{s!}\cdot\frac{(2t)^{s-1}}{(s-1)!}\cdot\frac{q^{2s}}{t^{2s}} \le \frac{2^s q^{2s}}{t\,s!(s-1)!}.$$
Thus Claim~\ref{claim:blue-cycle-sum} gives
$$\E\bigl(N_{\blue}(C_{2t})\bigr)
        \le
        \frac{T_t^{C}}{t}
        \sum_{s=1}^{t}
        \frac{2^s q^{2s}}{s!(s-1)!}
        \le
        \frac{T_t^{C}}{t}
        \sum_{s=1}^{\infty}
        \frac{2^s q^{2s}}{s!(s-1)!}.$$

Putting $s=m+1$, we get
$$\sum_{s=1}^{\infty}\frac{2^s q^{2s}}{s!(s-1)!}=\sum_{m=0}^{\infty}\frac{2^{m+1}q^{2m+2}}{(m+1)!m!}=
        \sqrt2 q
        \sum_{m=0}^{\infty}
        \frac{(\sqrt2 q)^{2m+1}}{m!(m+1)!}= \sqrt2 q I_1(2\sqrt2 q).$$
Using \eqref{eq:bessel-asymp} with $x=2\sqrt2 q$, we get, whenever $q\to\infty$,
$$\E\bigl(N_{\blue}(C_{2t})\bigr)
\le \frac{T_t^{C}}{t}\sqrt2 q I_1(2\sqrt2 q)
\le T_t^{C}\,t^{-1}\exp(2\sqrt2 q+o(q)).$$
The factor $\sqrt2 q (1+O(1/\sqrt2 q))/\sqrt{2\pi(2\sqrt2 q)}$ contributes only $\exp(o(q))$, which is why it is absorbed into the $o(q)$ term.
This proves the claim.
\end{proof}

\subsection{Asymptotic ratio to the conjectured value}

By Claims~\ref{claim:red-cycle-bound} and~\ref{claim:blue-cycle-bound}, the expected total number of monochromatic copies of $C_{2t}$ is at most
$$T_t^{C}\exp\left(-2q+O\left(\frac{q^2}{t}\right)\right)+T_t^{C}\,t^{-1}\exp(2\sqrt2 q+o(q)).$$
Choose
$$\gamma=\frac{1}{1+\sqrt2}~\text{and}~ q=\left\lfloor\frac{\gamma}{2}\log t\right\rfloor.$$
Then as $t\to\infty$, we have $q\to\infty$ and $q^2/t=o(1)$.  

We now estimate the two terms separately.
The red term is $$\exp\left(-2q+O\left(\frac{q^2}{t}\right)\right)=t^{-\gamma+o(1)}.$$
For the blue term, we deduce $$t^{-1}\exp(2\sqrt2 q+o(q))=t^{-1+\sqrt2\gamma+o(1)}.$$
Since $$-1+\sqrt2\gamma=-1+\frac{\sqrt2}{1+\sqrt2}=-\frac{1}{1+\sqrt2}=-\gamma,$$ the blue term is also $t^{-\gamma+o(1)}$.  
Combining the two terms, it follows that $$\E\bigl(N_{\red}(C_{2t})+N_{\blue}(C_{2t})\bigr) \le t^{-\gamma+o(1)}T_t^{C}.$$
By the probabilistic method, there exists a deterministic choice of the sets $M_b$ for which the actual number of monochromatic $C_{2t}$'s is at most this expectation. 
Therefore
$$m(C_{2t})\le t^{-\gamma+o(1)}T_t^{C} =t^{-\gamma+o(1)}\frac{(2t-1)!}{2}.$$
This proves Theorem~\ref{thm:main-cycle}.

Finally, the conjectured value in Conjecture~\ref{conj:CFSW-even} satisfies
$$\frac{2t-3}{2}(2t-2)!=\frac{2t-3}{2t-1}\cdot \frac{(2t-1)!}{2}=(1+o(1))\frac{(2t-1)!}{2}.$$
Hence our construction gives a polynomial improvement over the conjectured value.

\section{Proof of Theorem \ref{thm:main-odd-path}}\label{sec:count-odd-paths}

\subsection{The random-support coloring for odd paths}\label{sec:odd-path-coloring}

We now give an analogous local random construction for odd paths. Fix integers $t\ge 1$ and $q$ with $1\le q\le 2t$. 
Since $r(P_{2t+1})=3t$ by Theorem \ref{thm:Ramsey-paths}, the red-blue coloring is taken on the complete graph $K_{3t}$.
Let
$$V(K_{3t})=U\cup V,\,\, |U|=2t,\,~\text{and}~|V|=t$$
where $U$ and $V$ are pairwise disjoint. 
For each vertex $v\in V$, choose a $q$-element subset $M_v\subseteq U.$
We call $M_v$ the set of {\em blue supports} of $v$ in $U$.  
The edge-coloring of $K_{3t}$ is defined as follows:
\begin{itemize}[leftmargin=2em]
\setlength{\itemsep}{1pt}
\setlength{\parsep}{1pt}
\setlength{\parskip}{1pt}
\item all edges inside $U$ are blue;
\item all edges inside $V$ are red;
\item for $u\in U$ and $v\in V$, the edge $uv$ is blue if and only if $u\in M_v$; otherwise $uv$ is red.
\end{itemize}
The situation is presented in Figure~\ref{fig:odd-handstyle}.
Similarly, the construction will be used probabilistically. For each $v\in V$, choose $M_v$ independently and uniformly from all $q$-subsets of $U$.

\begin{figure}[htbp]
\centering
\begin{tikzpicture}[scale=1.05, every node/.style={font=\small}]
\tikzset{
  boxred/.style={draw=red, very thick, rounded corners=3pt, fill=SoftRed},
  boxblue/.style={draw=blue, very thick, rounded corners=3pt, fill=SoftBlue},
  boxgray/.style={draw=DarkGray, very thick, rounded corners=2pt, fill=white},
  rededge/.style={draw=red, very thick},
  blueedge/.style={draw=blue, very thick},
  info/.style={font=\small, text=DarkGray}
}

\draw[boxred] (1.4,4.35) rectangle (8.6,6.0);
\node at (5.0,5.7) {$V:~|V|=t$};

\draw[boxblue] (1.15,0.1) rectangle (8.85,2.45);
\node at (5.0,0.42) {$U:~|U|=2t$};

\draw[draw=DarkGray, very thick, fill=white] (3.3,1.275) ellipse (0.97 and 0.41);
\node at (3.3,1.275) {$M_v$};

\draw[blueedge] (4.8,4.95) -- (2.95,1.275);
\draw[blueedge] (4.8,4.95) -- (3.65,1.275);

\draw[rededge] (4.8,4.95) -- (6.8,1.275);
\draw[rededge] (4.8,4.95) -- (7.6,1.275);

\filldraw[draw=black, fill=black, very thick] (4.8,4.95) circle (0.06);
\node[above] at (4.8,4.95) {$v$};

\filldraw[draw=black, fill=black, very thick] (2.95,1.275) circle (0.05);
\filldraw[draw=black, fill=black, very thick] (3.65,1.275) circle (0.05);

\filldraw[draw=black, fill=black, very thick] (6.8,1.275) circle (0.06);
\filldraw[draw=black, fill=black, very thick] (7.6,1.275) circle (0.06);

\node[info, text=blue, align=center, fill=white, inner sep=1pt] at (2.3,3.45)
{$v$ is blue-adjacent to $M_v$};

\node[info, text=red, align=center, fill=white, inner sep=1pt] at (7.85,3.45)
{$v$ is red-adjacent to $U\setminus M_v$};

\end{tikzpicture}
\caption{The random-support construction for odd paths.}
\label{fig:odd-handstyle}
\end{figure}

\subsection{Counting monochromatic odd paths}

Let
$$T_t^{P}:=\frac{t}{2}(2t)!.$$
This is the number of alternating $P_{2t+1}$'s in the complete bipartite graph $K_{2t,t}$.

\subsubsection{The expected number of red paths}

\begin{claim}\label{claim:red-path-exact}
The expected number of red copies of $P_{2t+1}$ is
$$\E\bigl(N_{\red}(P_{2t+1})\bigr)=T_t^{P}\sigma_q^t,$$
where
$$\sigma_q=\frac{\binom{2t-2}{q}}{\binom{2t}{q}}=\frac{(2t-q)(2t-q-1)}{2t(2t-1)}.$$
\end{claim}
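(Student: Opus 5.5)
We mirror the proof of Claim~\ref{claim:red-cycle-exact}. The first step is structural. In the red graph the set $U$ is independent, since all edges inside $U$ are blue. Hence no two consecutive vertices of a red $P_{2t+1}$ lie in $U$, so a path on $2t+1$ vertices contains at most $t+1$ vertices of $U$. It also uses at most $|V|=t$ vertices of $V$, so it uses exactly $t+1$ vertices of $U$ and all $t$ vertices of $V$. These numbers force the path to alternate, starting and ending in $U$: it has the form $u_0v_1u_1v_2\cdots v_tu_t$. In particular, the red edges inside $V$ are never used, and every vertex $v\in V$ is an interior vertex of the path with exactly two neighbours on it, both in $U$.

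The second step counts candidates. We choose an ordered sequence of $t+1$ distinct vertices of $U$ in $\fall{2t}{t+1}$ ways and an ordering of $V$ in $t!$ ways. Each undirected path arises twice, once from each direction. This gives $\frac{t!}{2}\fall{2t}{t+1}=\frac{t}{2}(2t)!=T_t^{P}$ candidate alternating paths, which matches the count recorded after Conjecture~\ref{conj:conjecture-path} for $\chi(2t,t)$.

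The third step computes the probability that a fixed candidate is red. For each $v\in V$, let its two $U$-neighbours on the path be $u,u'$, which are distinct. The edges $vu$ and $vu'$ are both red exactly when $u,u'\notin M_v$. Since $M_v$ is a uniform $q$-subset of the $2t$-set $U$, this event has probability $\binom{2t-2}{q}/\binom{2t}{q}=\frac{(2t-q)(2t-q-1)}{2t(2t-1)}=\sigma_q$. All edges of the candidate are cross edges, and each is governed by exactly one $M_v$. Because the $M_v$ are independent, the probability that the whole path is red is $\sigma_q^t$. Linearity of expectation then gives $\E\bigl(N_{\red}(P_{2t+1})\bigr)=T_t^{P}\sigma_q^t$.

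The only delicate point is the structural step. We must check that the red path cannot use edges inside $V$, and that each $v$ has two distinct $U$-neighbours on the path. Otherwise the probability factor for some $v$ would differ from $\sigma_q$, and the count would not come out exactly as $T_t^{P}\sigma_q^t$. The pigeonhole count $|U\cap P|\le t+1$ together with $|V|=t$ settles both issues.
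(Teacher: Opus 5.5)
Your proposal is correct and follows the paper's proof essentially step for step. It uses the same independence of $U$ in the red graph to force the alternating $U$-$V$-$\cdots$-$V$-$U$ structure, the same candidate count $\frac{t!}{2}\fall{2t}{t+1}=T_t^{P}$, and the same per-vertex factor $\sigma_q$, combined via independence of the $M_v$ and linearity of expectation; your explicit pigeonhole justification of the alternation is slightly more careful than the paper's one-line assertion.
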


\begin{proof}
In the red graph, the set $U$ is independent and $V$ is a clique. 
Since no two vertices of $U$ can be adjacent on a red path, every red $P_{2t+1}$ uses all vertices of $V$, uses exactly $t+1$ vertices of $U$, and must alternate in the form as follows:
$$U-V-U-V-\cdots-V-U.$$

Ignoring the support restrictions, the number of such alternating paths is
$$\frac{1}{2}\fall{2t}{t+1}\,t!=\frac{t}{2}(2t)!=T_t^{P}.$$
Now fix one candidate alternating path.  Every vertex $v\in V$ has two neighbours in $U$ on the path.  The two corresponding edges are red if and only if neither of these two $U$-vertices lies in $M_v$.  Since $M_v$ is a uniformly random $q$-subset of $U$, this probability is
$$\sigma_q=\frac{\binom{2t-2}{q}}{\binom{2t}{q}}=\frac{(2t-q)(2t-q-1)}{2t(2t-1)}.$$
The choices of $M_v$ are independent for different vertices $v\in V$, so the probability that the fixed candidate path is red is $\sigma_q^t$. 
Thus, the expected number of red copies of $P_{2t+1}$ is
$$\E\bigl(N_{\red}(P_{2t+1})\bigr)=T_t^{P}\sigma_q^t,$$
as expected. 
\end{proof}

\begin{claim}\label{claim:red-path-bound}
If $q=o(\sqrt t)$, then
$$\E\bigl(N_{\red}(P_{2t+1})\bigr)\le T_t^{P}\exp\left(-q+O\left(\frac{q^2}{t}\right)\right).$$
\end{claim}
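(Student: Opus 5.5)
By Claim~\ref{claim:red-path-exact}, it suffices to show that $\sigma_q^t=\exp\bigl(-q+O(q^2/t)\bigr)$. The argument is the same as for Claim~\ref{claim:red-cycle-bound}, with $t$ replaced by $2t$ inside $\sigma_q$.

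First I expand $\sigma_q$ explicitly:
$$\sigma_q=\frac{(2t-q)(2t-q-1)}{2t(2t-1)}=\Bigl(1-\frac{q}{2t}\Bigr)\Bigl(1-\frac{q}{2t-1}\Bigr).$$
Write $\frac{q}{2t-1}=\frac{q}{2t}+O\bigl(\frac{q}{t^2}\bigr)$. This gives
$$\sigma_q=1-\frac{q}{t}+O\Bigl(\frac{q^2}{t^2}\Bigr)+O\Bigl(\frac{q}{t^2}\Bigr)=1-\frac{q}{t}+O\Bigl(\frac{q^2}{t^2}\Bigr),$$
where the last step uses $q\ge 1$.

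Next I take logarithms. Since $q=o(\sqrt t)$, the deviation $x=\sigma_q-1$ is $O(q/t)=o(1)$. Using $\log(1+x)=x+O(x^2)$ for small $x$, we get
$$\log\sigma_q=-\frac{q}{t}+O\Bigl(\frac{q^2}{t^2}\Bigr).$$
To avoid relying on an expansion of the product, one can instead take the logarithm of each factor directly: $\log(1-\frac{q}{2t})+\log(1-\frac{q}{2t-1})$ gives the same result.

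Finally I multiply by $t$, which gives $t\log\sigma_q=-q+O(q^2/t)$. Exponentiating and combining with Claim~\ref{claim:red-path-exact},
$$\E\bigl(N_{\red}(P_{2t+1})\bigr)=T_t^{P}\sigma_q^t=T_t^{P}\exp\Bigl(-q+O\Bigl(\frac{q^2}{t}\Bigr)\Bigr),$$
which is in fact an equality and so certainly gives the stated upper bound.

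There is no real obstacle here. The only points needing care are that the $-1$ in the factor $2t-1$ contributes only an $O(q/t^2)$ error, which is absorbed into $O(q^2/t^2)$, and that the hypothesis $q=o(\sqrt t)$ makes the error term $O(q^2/t)$ equal to $o(1)$. That second point is not needed for the inequality itself, only for its later use.
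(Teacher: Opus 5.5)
Your proposal is correct and follows the paper's route. Both arguments expand $\sigma_q=1-\frac{q}{t}+O\bigl(\frac{q^2}{t^2}\bigr)$, apply $\log(1+x)=x+O(x^2)$, multiply by $t$, and exponentiate. Your factorization $\sigma_q=\bigl(1-\frac{q}{2t}\bigr)\bigl(1-\frac{q}{2t-1}\bigr)$, together with absorbing the $O(q/t^2)$ error using $q\ge 1$, simply fills in a step the paper leaves implicit.
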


\begin{proof}
By Claim~\ref{claim:red-path-exact}, we only need to estimate $\sigma_q^t$. 
The proof is essentially the same as the proof of Claim \ref{claim:red-cycle-bound} and we sketch the proof as follows.  
Note that
$$\sigma_q=\frac{(2t-q)(2t-q-1)}{2t(2t-1)}=1-\frac{q}{t}+O\left(\frac{q^2}{t^2}\right).$$
Since $q=o(\sqrt{t})$, we have $q/t=o(1)$. Applying the Taylor expansion of $\log(1+x)$ around $x=0$, we obtain
$$\log \sigma_q=-\frac{q}{t}+O\left(\frac{q^2}{t^2}\right)$$
and then
$$t\log\sigma_q=-q+O\left(\frac{q^2}{t}\right).$$
 Hence
$$\sigma_q^t=\exp\left(-q+O\left(\frac{q^2}{t}\right)\right).$$
This claim is proved.
\end{proof}

\subsubsection{The expected number of blue paths}

In the blue graph, $U$ is a blue clique and $V$ is blue-independent.  Since $|U|=2t$, every blue $P_{2t+1}$ must use at least one vertex of $V$.

Suppose a blue $P_{2t+1}$ uses exactly $s$ vertices of $V$, where $1\le s\le t$.  Then it uses $2t+1-s$ vertices of $U$.  We first order the chosen vertices of $U$ and then insert the $s$ vertices of $V$ into the gaps.  Let $j$ be the number of inserted vertices of $V$ which lie at the two endpoints of the path. Thus $j=0,1,2$ and $j\le s$.

For fixed $s$ and $j$, we have the following facts:
\begin{itemize}[leftmargin=2.2em]
\item the $s$ vertices of $V$ can be chosen and ordered in $\fall{t}{s}$ ways;
\item the $2t+1-s$ vertices of $U$ can be chosen and ordered in $\fall{2t}{2t+1-s}$ ways;
\item there are $\binom{2}{j}\binom{2t-s}{s-j}$ ways to choose the endpoint and internal gaps;
\item a vertex of $V$ placed at an endpoint requires one blue support edge, with probability
$$\alpha_q=\frac{\binom{q}{1}}{\binom{2t}{1}}=\frac{q}{2t};$$
\item a vertex of $V$ placed internally requires two blue support edges, with probability
$$\beta_q=\frac{\binom{q}{2}}{\binom{2t}{2}}=\frac{q(q-1)}{2t(2t-1)}.$$
\end{itemize}
Dividing by $2$ for reversing the path gives the following exact expectation.

\begin{claim}\label{claim:blue-path-exact}
The expected number of blue copies of $P_{2t+1}$ is
\begin{equation*}
\begin{aligned}
\E\bigl(N_{\blue}(P_{2t+1})\bigr)
={}&\frac{1}{2}\sum_{s=1}^{t}
\fall{t}{s}\fall{2t}{2t+1-s}
\sum_{j=0}^{\min\{2,s\}}
\binom{2}{j}\binom{2t-s}{s-j}\alpha_q^j\beta_q^{s-j}.
\end{aligned}
\end{equation*}

Equivalently,
\begin{equation*}
\frac{\E\bigl(N_{\blue}(P_{2t+1})\bigr)}{T_t^{P}}=
\sum_{s=1}^{t}\binom{t-1}{s-1}
\sum_{j=0}^{\min\{2,s\}}
\binom{2}{j}\binom{2t-s}{s-j}\alpha_q^j\beta_q^{s-j}.
\end{equation*}
\end{claim}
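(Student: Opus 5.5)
The plan is a first-moment count: enumerate every potential blue $P_{2t+1}$ together with the blue support edges it would need, and compute the probability that all of them are present. First, every blue copy must use at least one vertex of $V$, since $|U|=2t$, and no two vertices of $V$ can be consecutive, since $V$ is blue-independent. So a blue path with exactly $s$ vertices of $V$ is determined by three pieces of data: the sequence of its $2t+1-s$ vertices from $U$, listed in path order ($\fall{2t}{2t+1-s}$ choices); the sequence of its $s$ vertices from $V$, in path order ($\fall{t}{s}$ choices); and a set of $s$ distinct gaps into which these are inserted.

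The gaps consist of the $2t-s$ internal gaps between consecutive $U$-vertices and the $2$ end gaps. Each gap receives at most one $V$-vertex, because two $V$-vertices in the same gap would be adjacent. Splitting by the number $j$ of $V$-vertices placed in end gaps gives $\binom{2}{j}\binom{2t-s}{s-j}$ gap choices. This correspondence is a bijection onto directed blue-candidate paths, so every undirected path is counted exactly twice, once per orientation; this is the factor $\tfrac12$. Edges inside $U$ are always blue, so the path is blue exactly when each inserted $v$ is blue-adjacent to its one or two $U$-neighbours, i.e.\ those neighbours lie in $M_v$.

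For the probabilities, a uniform $q$-subset of $U$ contains a prescribed single vertex with probability $q/(2t)=\alpha_q$. It contains a prescribed pair of distinct vertices with probability $\binom{2t-2}{q-2}/\binom{2t}{q}=\beta_q$. The two $U$-neighbours of an internal $v$ are distinct, so $\beta_q$ is the right value. The sets $M_v$ for distinct $v$ are independent, so a fixed candidate is blue with probability $\alpha_q^j\beta_q^{s-j}$. Linearity of expectation then gives the first formula.

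For the normalized form, divide by $T_t^P=\tfrac t2(2t)!$, using
\[
\fall{2t}{2t+1-s}=\frac{(2t)!}{(s-1)!},\qquad \fall{t}{s}=\frac{t!}{(t-s)!}.
\]
Then
\[
\frac{\tfrac12\fall{t}{s}\fall{2t}{2t+1-s}}{\tfrac t2(2t)!}
=\frac{t!}{t\,(t-s)!\,(s-1)!}
=\binom{t-1}{s-1}.
\]
The main step needing care is the bijection claim: each blue path is counted exactly twice, never with a $V$-vertex in two gaps, and never with two $V$-vertices adjacent. The endpoint case with $s=1$ and $j\le1$, and the requirement $j\le s$, must also be handled correctly. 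Both are routine once the insertion description is fixed.
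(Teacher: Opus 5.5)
Your proposal is correct and follows the paper's argument: the same gap-insertion correspondence (ordered $U$-sequence, ordered $V$-sequence, $j$ end gaps and $s-j$ internal gaps), the same support probabilities $\alpha_q,\beta_q$ with independence across $v$, the factor $\tfrac12$ for reversal, and the same falling-factorial simplification to $\binom{t-1}{s-1}$ after dividing by $T_t^P$. You are slightly more explicit than the paper in justifying why each gap holds at most one $V$-vertex and why the correspondence is a bijection onto directed paths.
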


\begin{proof}
The first formula follows from the counting described above. For the normalized formula, divide by $T_t^{P}=t!\fall{2t}{t+1}/2$.
Since
$$\frac{\fall{t}{s}}{t!}=\frac{1}{(t-s)!}~~\text{and}~~\frac{\fall{2t}{2t+1-s}}{\fall{2t}{t+1}}=\frac{(t-1)!}{(s-1)!},$$
we obtain
$$\frac{\fall{t}{s}\fall{2t}{2t+1-s}}{t!\fall{2t}{t+1}}=\frac{(t-1)!}{(s-1)!(t-s)!}=\binom{t-1}{s-1}.$$
This yields the stated normalized expression.
\end{proof}

We now estimate the normalized blue expectation. 
If $q=O(\log t)$, then
$$\beta_q=\frac{q(q-1)}{2t(2t-1)}\le (1+o(1))\frac{q^2}{4t^2}.$$
We split the sum according to the value of $j$.

\begin{claim}\label{claim:blue-path-bound}
If $q=O(\log t)$, then
$$\E\bigl(N_{\blue}(P_{2t+1})\bigr)\le T_t^{P}\,t^{-1}\exp(\sqrt2 q+o(q)).$$
\end{claim}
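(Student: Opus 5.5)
We start from the normalized identity in Claim~\ref{claim:blue-path-exact} and split the inner sum according to $j\in\{0,1,2\}$. The aim is to bound each piece by $t^{-1}\exp(\sqrt2 q+o(q))$, reducing each to a Bessel-type series as in Claim~\ref{claim:blue-cycle-bound}.

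First the crude factor bounds. We use $\binom{t-1}{s-1}\le t^{s-1}/(s-1)!$ and $\binom{2t-s}{s-j}\le (2t)^{s-j}/(s-j)!$. We also use $\alpha_q=q/(2t)$ and $\beta_q\le(1+o(1))q^2/(4t^2)$, valid for $q=O(\log t)$. The factor $(1+o(1))^{s}$ is harmless: the dominant terms have $s=O(q)=O(\log t)$, and for larger $s$ the terms are super-exponentially small. To make this precise, replace $1+o(1)$ by a fixed $1+\varepsilon$ and let $\varepsilon\to 0$ at the end.

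The $j=0$ term is then at most
\[
\sum_{s\ge1}\frac{t^{s-1}}{(s-1)!}\cdot\frac{(2t)^s}{s!}\cdot\Bigl(\frac{(1+\varepsilon)q^2}{4t^2}\Bigr)^s
=\frac1t\sum_{s\ge1}\frac{\bigl((1+\varepsilon)q^2/2\bigr)^s}{s!(s-1)!}.
\]
Setting $y=\sqrt{(1+\varepsilon)/2}\,q$, this series equals $y\,I_1(2y)$. By \eqref{eq:bessel-asymp} it is $\exp(2y+o(q))=\exp(\sqrt{2(1+\varepsilon)}\,q+o(q))$, which gives $\exp(\sqrt2q+o(q))$ after $\varepsilon\to0$.

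For $j=1$ the term is at most
\[
2\cdot\frac{q}{2t}\sum_{s\ge1}\frac{t^{s-1}}{(s-1)!}\cdot\frac{(2t)^{s-1}}{(s-1)!}\Bigl(\frac{(1+\varepsilon)q^2}{4t^2}\Bigr)^{s-1}
=\frac{q}{t}\sum_{m\ge0}\frac{y^{2m}}{m!^2}=\frac{q}{t}I_0(2y).
\]
It is again $t^{-1}\exp(\sqrt2 q+o(q))$; it suffices to bound $\sum y^{2m}/m!^2\le(\sum y^m/m!)^2=e^{2y}$, so no $I_0$ asymptotics are needed.

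For $j=2$ (with $s\ge2$) we get
\[
\frac{q^2}{4t^2}\sum_{s\ge2}\frac{t^{s-1}(2t)^{s-2}}{(s-1)!(s-2)!}\beta_q^{s-2}
\le\frac{q^2}{4t}\sum_{m\ge0}\frac{y^{2m}}{(m+1)!m!},
\]
which is again at most $t^{-1}q^2e^{2y}$. The polynomial-in-$q$ prefactors are $\exp(o(q))$.

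The main obstacle is bookkeeping rather than anything deep. We must verify that the powers of $t$ cancel to exactly $t^{-1}$ in each of the three cases, which they do as computed: $t^{s-1}(2t)^{s-j}t^{-2(s-j)}t^{-j}$ always leaves $t^{-1}$. We must also justify absorbing the $(1+o(1))^s$ loss into $\exp(o(q))$, which the fixed-$\varepsilon$ argument handles. Summing the three contributions gives $\E(N_{\blue}(P_{2t+1}))\le T_t^P\,t^{-1}\exp(\sqrt2q+o(q))$, as required.
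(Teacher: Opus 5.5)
Your proof is correct and follows essentially the same route as the paper: split by $j\in\{0,1,2\}$, apply the crude binomial bounds so that each piece becomes $t^{-1}$ times a Bessel-type series in $q^2/2$, and bound that series by $e^{\sqrt2 q}$. The paper does the last step for all three pieces with the elementary inequality $\sum_m x^m/(m!)^2\le \exp(2\sqrt x)$, whereas you use the $I_1$ asymptotics for $j=0$. Your fixed-$\varepsilon$ handling of the $(1+o(1))^s$ loss is more careful than the paper's, but it is not needed: since $q\le 2t$, one has $\beta_q=\frac{q(q-1)}{2t(2t-1)}\le \frac{q^2}{4t^2}$ exactly.
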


\begin{proof}
Let $S_j$ be the contribution of the terms with exactly $j$ endpoint vertices from $V$ in the normalized expression of Claim~\ref{claim:blue-path-exact}.
We first prove the following fact.

\begin{fact}\label{Fac:fact}
For $x>0$,  we have
$$\sum_{m=0}^{\infty} \frac{x^m}{(m!)^2}\leq \exp(2\sqrt{x}).$$
\end{fact}
\begin{proof}
By the binomial theorem, we have
$$\frac{(2m)!}{(m!)^2}=\binom{2m}{m}\leq \sum_{i=0}^{2m} \binom{2m}{i}=(1+1)^{2m}=4^m.$$
Then
$$\sum_{m=0}^{\infty} \frac{x^m}{(m!)^2}
\leq \sum_{m=0}^{\infty} \frac{(2 \sqrt{x})^{2m}}{(2m)!}
\leq \sum_{n=0}^{\infty} \frac{(2 \sqrt{x})^{n}}{n!}
=\exp(2\sqrt{x}).$$
The first inequality is based on $1/(m!)^2\leq 4^m/(2m)!$.
The last equality follows from the Taylor expansion of $\exp(2\sqrt{x})$.
\end{proof}

We count the three types of blue odd paths described below.

\medskip\noindent\textbf{Type I: $j=0$.}
Then
$$S_0=\sum_{s=1}^{t}\binom{t-1}{s-1}\binom{2t-s}{s}\beta_q^s.$$
Using
$$\binom{t-1}{s-1}\le \frac{t^{s-1}}{(s-1)!},~~\binom{2t-s}{s}\le \frac{(2t)^s}{s!}, ~~\text{and}~~\beta_q\le (1+o(1))\frac{q^2}{4t^2},$$
we get
$$S_0\le \frac{1+o(1)}{t}\sum_{s=1}^{\infty}\frac{(q^2/2)^s}{(s-1)!s!}.$$
Let $m=s-1$.
Applying the Fact, we have
\begin{align*}
S_0\le &\frac{(1+o(1))q^2}{2t}
\sum_{m=0}^{\infty}\frac{(q^2/2)^m}{m!(m+1)!}
\leq \frac{(1+o(1))q^2}{2t} \sum_{m=0}^{\infty}\frac{(q^2/2)^m}{(m!)^2}\\
\leq &\frac{(1+o(1))q^2}{2t} \exp(\sqrt{2} q)
\leq t^{-1}\exp(\sqrt2 q+o(q)).
\end{align*}
In the final inequality, the multiplicative factor $(1+o(1))q^2/2$ contributes only $\exp(o(q))$.

\medskip\noindent\textbf{Type II: $j=1$.}
Clearly,
$$S_1=\sum_{s=1}^{t}\binom{t-1}{s-1}\binom{2}{1}\binom{2t-s}{s-1}\alpha_q\beta_q^{s-1}.$$
Recall that $\alpha_q=q/2t$. Similarly, easy calculations show that
$$S_1\le \frac{(1+o(1))q}{t}\sum_{s=1}^{\infty}\frac{(q^2/2)^{s-1}}{((s-1)!)^2}\le \frac{(1+o(1))q}{t}\sum_{m=0}^{\infty}\frac{(q^2/2)^{m}}{(m!)^2} \le t^{-1}\exp(\sqrt{2} q+o(q)).$$

\medskip\noindent\textbf{Type III: $j=2$.}
Obviously, $$S_2=\sum_{s=2}^{t}\binom{t-1}{s-1}\binom{2t-s}{s-2}\alpha_q^2\beta_q^{s-2}.$$
The same estimates yield
$$S_2\le \frac{(1+o(1))q^2}{t}\sum_{s=2}^{\infty}\frac{(q^2/2)^{s-2}}{(s-1)!(s-2)!}\le t^{-1}\exp(\sqrt2 q+o(q)).$$

Combining the estimates for the three types, we obtain
$$\frac{\E\bigl(N_{\blue}(P_{2t+1})\bigr)}{T_t^{P}}=S_0+S_1+S_2\le t^{-1}\exp(\sqrt2 q+o(q)).$$
This proves claim \ref{claim:blue-path-bound}.
\end{proof}

\subsection{Asymptotic ratio to the conjectured value}
The proof follows the same reasoning as in Theorem~\ref{thm:main-cycle}.
By Claims~\ref{claim:red-path-bound} and~\ref{claim:blue-path-bound}, the expected total number of monochromatic copies of $P_{2t+1}$ is at most
$$T_t^{P}\exp\left(-q+O\left(\frac{q^2}{t}\right)\right)+T_t^{P}t^{-1}\exp(\sqrt2 q+o(q)).$$
We choose the parameters as below:
$$\gamma=\frac{1}{1+\sqrt2} ~~\text{and}~~q=\left\lfloor \gamma\log t\right\rfloor.$$
Then $q\to\infty$ and $q^2/t=o(1)$. 
The red term and blue term are
$$\exp\left(-q+O\left(\frac{q^2}{t}\right)\right)=t^{-\gamma+o(1)}$$
and
$$t^{-1}\exp(\sqrt2 q+o(q))=t^{-1+\sqrt2\gamma+o(1)},$$
respectively.
Since $-1+\sqrt2\gamma=-\gamma$, we deduce
$$\E\bigl(N_{\red}(P_{2t+1})+N_{\blue}(P_{2t+1})\bigr)\le t^{-\gamma+o(1)}T_t^{P}.$$
It implies that
$$m(P_{2t+1})\le t^{-\gamma+o(1)}T_t^{P}=t^{-\gamma+o(1)}\frac{t}{2}(2t)!.$$
The proof of Theorem~\ref{thm:main-odd-path} complete.

\section{Proof of Theorem \ref{thm:main-theorem-even-path}}\label{sec:even paths}

\subsection{A red/blue edge-coloring construction for even paths}\label{subsec:counterexample-graph-even-path}
In this subsection, we first introduce a red/blue edge-coloring of the complete graph $K_n$.
Let
$$V(K_n)=S\cup R\cup B\cup\{x\} \text{ and } A=S\cup R,$$
where $S,R,B$ and $\{x\}$ are pairwise disjoint. 
The edge-coloring of $K_n$ is defined as follows:
\begin{itemize}[leftmargin=2em]
\setlength{\itemsep}{1pt}
\setlength{\parsep}{1pt}
\setlength{\parskip}{1pt}
\item all edges inside $A$ are blue;
\item all edges inside $B$ are blue;
\item all edges between $A$ and $B$ are red;
\item all edges between $x$ and $B$ are red;
\item all edges between $x$ and $S$ are blue;
\item all edges between $x$ and $R$ are red.
\end{itemize}
Equivalently, the red graph consists of the complete bipartite graph between $A\cup\{x\}$ and $B$, together with the red star from $x$ to $R$; the blue graph consists of the complete graphs $K_{|A|}$ on $A$ and $K_{|B|}$ on $B$, together with the blue star from $x$ to $S$. The construction is shown in Figure \ref{fig:block}.

\begin{figure}[ht]
\centering
\begin{tikzpicture}[scale=1.0, every node/.style={font=\small}]
  \node[circle,draw,fill=white,inner sep=2pt] (x) at (0.4,2.2) {$x$};

  \node[draw=blue,very thick,rounded corners,minimum width=2.0cm,minimum height=1.0cm,fill=blue!6] (S) at (-2.0,0) {$S$};
  \node[draw=blue,very thick,rounded corners,minimum width=2.0cm,minimum height=1.0cm,fill=blue!6] (R) at (0.4,0) {$R$};
  \node[draw=blue,very thick,rounded corners,minimum width=2.1cm,minimum height=1.0cm,fill=blue!6] (B) at (4.0,0) {$B$};

\node[draw=blue,very thick,rounded corners,fit=(S)(R),inner xsep=0.2cm,inner ysep=0.4cm] (Abox) {};
\node at (Abox.south) [yshift=0.25cm] {$A$};

  \draw[draw=blue,very thick] (x) -- (S.north);
  \draw[draw=red,very thick] (x) -- (R.north);
  \draw[draw=red,very thick] (x) -- (B.north);

  \draw[draw=red,line width=1.2mm] (Abox.east) -- (B.west);
\end{tikzpicture}
\caption{A red/blue edge-coloring of the complete graph $K_n$. }
\label{fig:block}
\end{figure}

\subsection{Counting monochromatic even paths}

By Theorem \ref{thm:Ramsey-paths}, $r(P_{2t})=2t-1+t=3t-1$.
Given an integer $\alpha$ (the exact value of $\alpha$ will be specified later),
consider the edge-coloring of $K_{3t-1}$ given in Subsection \ref{subsec:counterexample-graph-even-path} with
$$|S|=2t-1-\alpha,\,\,|R|=\alpha, \text{ and } |B|=t-1.$$
Clearly,
$$|A|=|S|+|R|=2t-1 \text{ and } |A\cup\{x\}|=2t.$$

\begin{claim}\label{C1}
The number of blue copies of $P_{2t}$ is
$$\left(2t-1-\alpha+\binom{2t-1-\alpha}{2}\right)(2t-2)!.$$
\end{claim}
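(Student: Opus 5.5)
The plan is to use the structure of the blue graph directly. The blue graph is the disjoint union of two parts. The first is the clique $K_{|B|}$ on $B$, with $|B|=t-1<2t$. The second is the clique on $A$ together with the vertex $x$ joined in blue exactly to $S$. Since a path is connected and $B$ is too small, every blue $P_{2t}$ lies in the component $A\cup\{x\}$, which has exactly $2t$ vertices. So we are counting blue Hamilton paths of the graph $G$ obtained from $K_{2t}$ on $A\cup\{x\}$ by deleting the star from $x$ to $R$; equivalently, $x$ is adjacent precisely to the $2t-1-\alpha$ vertices of $S$.

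Next, split according to the position of $x$ on the path. Write the path as an unordered vertex sequence of length $2t$.

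\textbf{Case 1: $x$ is an endpoint.} Then $x$ has a single neighbour on the path, which must lie in $S$; this gives $2t-1-\alpha$ choices. The remaining $2t-2$ vertices of $A$ can follow in any order, since $A$ is a blue clique, giving $(2t-2)!$ orderings. Orienting the path so that it starts at $x$ removes the reversal ambiguity. This case contributes $(2t-1-\alpha)(2t-2)!$ paths.

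\textbf{Case 2: $x$ is an internal vertex.} Then $x$ has two neighbours on the path, and both must lie in $S$. There are $\binom{2t-1-\alpha}{2}$ choices for this unordered pair $\{u,w\}$. Contract the segment $u\,x\,w$ to a single block. The path then corresponds to a Hamilton path on $2t-2$ objects: the block together with the other $2t-3$ vertices of $A$. The number of linear arrangements of these objects is $(2t-2)!$. Each arrangement can be expanded with the block read as $u x w$ or as $w x u$, giving $2\cdot(2t-2)!$ ordered sequences. Dividing by $2$ for reversal, we get $(2t-2)!$ undirected paths per pair. All such sequences are blue, because the edges inside $A$ are blue and $xu$, $xw$ are blue. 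This case contributes $\binom{2t-1-\alpha}{2}(2t-2)!$ paths.

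Adding the two cases gives the stated formula. The only point needing care is the reversal bookkeeping: in Case 1 orientation is fixed by $x$, while in Case 2 the factor $2$ from the block orientation cancels the reversal. The rest is routine.
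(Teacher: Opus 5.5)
Your proposal is correct and follows the paper's proof essentially step for step. You restrict to the blue component $A\cup\{x\}$ of order $2t$ and split according to whether $x$ is an endpoint or an internal vertex; in the internal case you contract $u\,x\,w$ to a block, so the orientation factor $2$ cancels the reversal factor $2$.
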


\begin{proof}
The blue graph has two blue components relevant here: $B$, of order $t-1$, and $A\cup\{x\}$, of order $2t$. Since $t-1<2t$, no blue copies of $P_{2t}$ can lie in $B$. 
Also there are no blue edges between $B$ and $A\cup\{x\}$. 
Therefore every blue copy of $P_{2t}$ must use exactly all vertices of $A\cup\{x\}$.

Inside $A\cup\{x\}$, the set $A$ is a clique of order $2t-1$, and $x$ is adjacent in blue exactly to the $2t-1-\alpha$ vertices of $S$. 
We count Hamilton paths of this graph (induced by blue edges).

We first suppose that $x$ is an endpoint of the Hamilton path. 
Choose its unique neighbor in the path: there are $2t-1-\alpha$ choices. 
After that neighbor is chosen, the remaining $2t-2$ vertices of $A$ can be ordered arbitrarily. 
Hence, the number of such paths is
$$(2t-1-\alpha)(2t-2)!.$$

Now, assume that $x$ is an internal vertex of the Hamilton path. 
The two neighbors $s_1$ and $s_2$ of $x$ must be two distinct vertices of $S$. 
Choose this unordered pair in $\binom{2t-1-\alpha}{2}$ ways. 
Contract the three-vertex segment $s_1-x-s_2$ to a single block. For a fixed unordered pair $\{s_1,s_2\}$, the block has two orientations, and the block together with the other $2t-3$ vertices of $A$ may be ordered in $(2t-2)!$ ways. Since reversing the whole path gives the same unlabelled path, the factor $2$ from the block orientation and the factor $2$ from reversal cancel. Hence there are $(2t-2)!$ paths for each unordered pair. 
The internal case contributes $$\binom{2t-1-\alpha}{2}(2t-2)!.$$
Adding the endpoint and internal cases gives
$$\left(2t-1-\alpha+\binom{2t-1-\alpha}{2}\right)(2t-2)!$$
blue copies of $P_{2t}$. This proves the claim.
\end{proof}

\begin{claim}\label{C2}
The number of red copies of $P_{2t}$ is
$$\left(\alpha t+\binom{\alpha}{2}\right)(2t-2)!.$$
\end{claim}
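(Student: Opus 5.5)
The plan is to use the structure of the red graph directly. We view it as the complete bipartite graph between $A\cup\{x\}$ (order $2t$) and $B$ (order $t-1$), plus the red star from $x$ to $R$ with $|R|=\alpha$. Apart from the edges $xr$ with $r\in R$, the sets $B$ and $A\cup\{x\}$ are both red-independent. The first step is to show that every red $P_{2t}$ uses at least one edge $xr$, and to pin down its shape.

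\textbf{Step 1 (structure).} Take a red $P_{2t}$ and let $b$ be the number of its vertices in $B$, so $b\le t-1$. Consider the maximal runs of consecutive non-$B$ vertices along the path; call them \emph{blocks}. Since $A\cup\{x\}$ spans only the star edges $xr$, each block is one of the following:
\begin{itemize}[leftmargin=2em]
\item a single vertex;
\item a pair $x r$ or $r x$;
\item a triple $r_1 x r_2$.
\end{itemize}
At most one block is not a single vertex, because $x$ occurs only once. Blocks and $B$-vertices must alternate, so the number of blocks is at most $b+1$.
\begin{itemize}[leftmargin=2em]
\item If no star edge is used, there are $2t-b$ blocks. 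Then $2t-b\le b+1$ forces $b\ge t$, which is impossible.
\item If exactly one edge $xr$ is used, there are $2t-b-1$ blocks. Then $2t-b-1\le b+1$ forces $b=t-1$. The path is then block, $B$, block, $\dots$, block, with $t$ blocks, and it uses all of $B$.
\item If the triple $r_1xr_2$ is used, there are $2t-b-2$ blocks, and $b\ge t-1$ again forces $b=t-1$. Now there are $t-1$ blocks and $t-1$ vertices of $B$, alternating, with the path starting in either class.
\end{itemize}

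\textbf{Step 2 (counting).} We count directed sequences in each case and then divide by $2$ for reversal.

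Case one (a single edge $xr$):
\begin{itemize}[leftmargin=2em]
\item choose $r$ in $\alpha$ ways;
\item choose the slot of the $xr$ block among the $t$ block slots in $t$ ways;
\item orient the block in $2$ ways;
\item fill the remaining $t-1$ block slots with distinct vertices of $A\setminus\{r\}$ in $(2t-2)_{t-1}$ ways;
\item order $B$ in $(t-1)!$ ways.
\end{itemize}
This gives $\tfrac12\cdot 2\alpha t\,(2t-2)_{t-1}(t-1)!=\alpha t\,(2t-2)!$.

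Case two (the triple $r_1xr_2$):
\begin{itemize}[leftmargin=2em]
\item choose the pair $\{r_1,r_2\}$ in $\binom{\alpha}{2}$ ways;
\item orient the triple in $2$ ways;
\item choose the alternation pattern (start with a block or with a $B$-vertex) in $2$ ways;
\item choose the slot of the triple in $t-1$ ways;
\item fill the other $t-2$ block slots from the $2t-3$ remaining vertices of $A$ in $(2t-3)_{t-2}$ ways;
\item order $B$ in $(t-1)!$ ways.
\end{itemize}
This gives $\tfrac12\cdot4\binom{\alpha}{2}(t-1)\frac{(2t-3)!}{(t-1)!}(t-1)!=\binom{\alpha}{2}(2t-2)!$.

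\textbf{Step 3.} Every sequence of the described form is indeed a red path. Each consecutive pair is either a cross edge between $A\cup\{x\}$ and $B$ or a star edge $xr$, and both kinds are red. The two cases are disjoint and exhaust all red $P_{2t}$. Summing the two cases gives the stated total $\left(\alpha t+\binom{\alpha}{2}\right)(2t-2)!$.

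The main obstacle is Step 1, namely the block-counting inequality that rules out $b<t-1$ and the star-free case. It must be carried out carefully so that $x$ being adjacent to $B$ on both sides (which uses no star edge) is correctly excluded, and so that the two alternation patterns in Case two are not missed.
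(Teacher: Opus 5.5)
Your proof is correct and follows essentially the same approach as the paper: rule out paths that avoid the star edges via the bipartite length bound, split according to whether one edge $xr$ or the segment $r_1xr_2$ is used, contract it to a block, and count the alternating arrangements. Your slot-filling count with falling factorials is equivalent to the paper's choose-then-order count, and your block-counting in Step 1 explicitly justifies the paper's bare assertion that such a path must use all $t-1$ vertices of $B$.
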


\begin{proof}
Let $L=A\cup\{x\}$. 
The red graph contains all edges between $L$ and $B$, and it also contains the red edges from $x$ to $R$. 
There are no other red edges inside $L$, and there are no red edges inside $B$.

Since $|B|=t-1$, a red path in the bipartite graph between $L$ and $B$ has at most
$2|B|+1=2t-1$
vertices. 
Therefore a red copy of $P_{2t}$ must use at least one red edge of the form $xr$ with $r\in R$. 
Since all such edges meet $x$, a path can use either one or two of them. We count the two types of red paths described below.

\medskip\noindent\textbf{Type I: exactly one edge $xr$ is used.}
Choose $r\in R$ in $\alpha$ ways. 
The path must use all $t-1$ vertices of $B$, and it must use $t+1$ vertices of $L$. 
Besides $x$ and $r$, the number of ways to choose $t-1$ further vertices from $A\setminus\{r\}$ is $$\binom{2t-2}{t-1}.$$


Treat the adjacent pair $xr$ as one $L$-block. Then we have $t$ $L$-blocks and $t-1$ vertices of $B$. A red path using them must alternate, and hence its two endpoints are $L$-blocks. For fixed chosen vertices, order the $t$ $L$-blocks in $t!$ ways and insert the $t-1$ vertices of $B$ in the $t-1$ gaps in $(t-1)!$ ways. The block $xr$ has two orientations, but reversal of the whole path identifies two orientations of the same unlabelled path, so these factors cancel. Thus the number for fixed vertices is $$t!(t-1)!.$$ 

The Type I contribution is
$$\alpha\binom{2t-2}{t-1}t!(t-1)!=\alpha t(2t-2)!.$$

\medskip\noindent\textbf{Type II: two edges $xr_1$ and $xr_2$ are used.}
Choose the unordered pair $\{r_1,r_2\}\subset R$ in $$\binom{\alpha}{2}$$ ways. 
The path must use all vertices of $B$, and it must use $t+1$ vertices of $L$. Besides $x, r_1, r_2$, the number of ways to choose $t-2$ further vertices from $A\setminus\{r_1,r_2\}$ is $$\binom{2t-3}{t-2}.$$

Now, we count paths for fixed vertices.
Treat the three-vertex segment $\{r_1,x,r_2\}$ as one $L$-block. 
Then there are $t-1$ $L$-blocks and $t-1$ vertices of $B$. 
An alternating path with equal part sizes has one endpoint in each part. 
There are two alternative approaches: 
$$L-B-L-B-\cdots-L-B$$
and
$$B-L-B-L-\cdots-B-L.$$
For either case, the $L$-blocks may be ordered in $(t-1)!$ ways and the $B$ vertices in $(t-1)!$ ways. 
The three-vertex block paths is $r_1-x-r_2$ and $r_2-x-r_1$.
Thus the number of oriented path is 
$2((t-1)!)^2 \cdot 2=4((t-1)!)^2$.
Dividing by $2$ for reversal, the number of unoriented paths for fixed vertices is 
$$2((t-1)!)^2.$$

Therefore, the contribution of Type II is
$$\binom{\alpha}{2}\binom{2t-3}{t-2}2((t-1)!)^2
 =\binom{\alpha}{2}(2t-2)!.$$
Summing the two types of paths proves the claim.
\end{proof}


Combining Claims \ref{C1} and \ref{C2}, we conclude that the number of monochromatic copies 
of $P_{2t}$ in this red/blue coloring of the complete graph is 
$$\left(2t-1-\alpha+\binom{2t-1-\alpha}{2}+\alpha t+\binom{\alpha}{2}\right)(2t-2)!=(\alpha^2-t \alpha+2t^2-t)(2t-2)!.$$
It is minimized by choosing $\alpha=\lfloor t/2\rfloor$,
which therefore yields the best possible bound $$m(P_{2t}) \le\frac{(2t)!}{2}-\left\lfloor \frac{t^2}{4}\right\rfloor(2t-2)!.$$

\subsection{Asymptotic ratio to the conjectured value}
Since $$m(P_{2t}) \leq\frac{(2t)!}{2}-\left\lfloor \frac{t^2}{4}\right\rfloor(2t-2)!$$
and $$\frac{\left\lfloor t^2/4\right\rfloor}{t(2t-1)}=\frac{1}{8}+o(1),$$
we obtain
$$m(P_{2t}) \leq \left(\frac{7}{8}+o(1) \right)\frac{(2t)!}{2}.$$
The proof is complete.

\section{Concluding Remarks}\label{sec:remarks}

The two constructions about $m(C_{2t})$ and $m(P_{2t+1})$ are both local random constructions, but the local random sets play slightly different roles.

For even cycles, the set $A$ is a red core of size $t$, while $B$ is a blue reservoir of size $2t-1$. Since $B$ is one vertex short of containing $C_{2t}$ by itself, every blue $C_{2t}$ must enter $A$ through portal edges. On the other hand, every red $C_{2t}$ is forced to use all of $A$ and alternate with $t$ vertices of $B$. The random portal sets therefore reduce the red cycles multiplicatively, while the sparsity of the portals controls the blue cycles.

For odd paths, the set $U$ is a blue clique of size $2t$, while $V$ is a red clique of size $t$. Every red $P_{2t+1}$ is forced to alternate through all vertices of $V$, so the random supports remove a multiplicative proportion of red paths. A blue $P_{2t+1}$ must insert at least one vertex of $V$ into the blue clique $U$, and each insertion requires one or two support edges.

The exponents in Theorems~\ref{thm:main-cycle} and~\ref{thm:main-odd-path} arise from balancing the two contributions. In the even-cycle case, the red contribution is essentially $\exp(-2q)$ and the blue contribution is essentially $t^{-1}\exp(2\sqrt2q)$. In the odd-path case, the red contribution is essentially $\exp(-q)$ and the blue contribution is essentially $t^{-1}\exp(\sqrt2q)$. Both balances lead to the same final exponent
$$\gamma=\frac{1}{1+\sqrt2}.$$

The constructions only give upper bounds. We cannot prove that these bounds are best possible. It would be interesting to determine the true order of $m(C_{2t})$ and $m(P_{2t+1})$, or even to decide whether these local random models are optimal within suitable restricted classes of colorings.

We once tried to apply the local random construction to find a red/blue edge-coloring of $K_{3t-1}$ for $P_{2t}$, the expected number of monochromatic copies of $P_{2t}$ turned out to be even larger than the conjectured value. We do not know if the upper bound obtained in Theorem \ref{thm:main-theorem-even-path} is near the value of $m(P_{2t})$. So, it is of interesting to determine the true order of $m(P_{2t})$ and corresponding optimal red/blue edge-colorings.

\section*{Acknowledgement}
\noindent This research is supported by National Key R\&D Program of China under grant number 2024YFA1013900 and NSFC under grant number 12471327.
\section*{Declaration}
	
\noindent$\textbf{Conflict~of~interest}$
The authors declare that they have no known competing financial interests or personal relationships that could have appeared to influence the work reported in this paper.
\vskip 2mm	
\noindent$\textbf{Data~availability}$
No data was used for the research described in the article.

\end{document}